\documentclass[11pt]{amsart}
\usepackage{amsmath}
\usepackage{amssymb}
\usepackage{mathrsfs}
\usepackage{esint}
\usepackage{tikz}
\usepackage{comment}
\usepackage{ulem}
\usepackage{xcolor}
\usepackage{hyperref}

\theoremstyle{plain}
\newtheorem{theorem}{Theorem}[section]
\newtheorem{lemma}[theorem]{Lemma}
\newtheorem{proposition}[theorem]{Proposition}

\theoremstyle{definition}

\newtheorem{definition}[theorem]{Definition}

\numberwithin{equation}{section}

\makeatletter
\@namedef{subjclassname@2020}{%
  \textup{2020} Mathematics Subject Classification}
\makeatother

\def\be{\begin{equation}}
\def\ee{\end{equation}}

\def\R{\mathbb R}
\def\N{\mathbb N}

\def\Z{\mathbb Z}

\def\epsilon{\varepsilon}

\def\({\left(}
\def\){\right)}

\begin{document}

\title[Solutions to the Liouville Equation]
{A Parametrization of Solutions\\ to the Liouville Equation in a Punctured Disc}

\author[Han]{Qing Han}
\address{Department of Mathematics\\
University of Notre Dame\\
Notre Dame, IN 46556, USA} 
\email{qhan@nd.edu}
\author[Hung]{Jui-Yun Hung}
\address{Mathematical Sciences Institute\\
The Australian National University\\
Canberra, ACT 2600, Australia} 
\email{JUI-YUN.HUNG@anu.edu.au}


\begin{abstract}
In this paper, we study parametrization of solutions to the Liouville equation near their isolated singular points. 
We demonstrate that prescribing asymptotic behaviors forms a new type of problem
leading to all such solutions. The harmonic functions determining the asymptotic behaviors
play the role of ``free data" as the boundary values in the boundary value problems.
\end{abstract}

\thanks{Q. Han acknowledges the support of NSF Grant DMS-2305038.}


\maketitle

\section{Introduction}\label{sec-Intro}

In this paper, we study the Liouville equation
\begin{align}
\label{eq-LiouvilleEquation}
\Delta u + e^u = 0 \quad\text{in }B_1 \setminus \{0\},
\end{align}
where $B_1 \subseteq\R^2$ is the unit ball in $\R^2$. This is the 2-dimensional analogue of the Yamabe equation
\begin{align}
\label{eq-Yamabe-equation}
\Delta u + \frac{1}{4} n (n-2) u^{\frac{n+2}{n-2}} = 0 \text{ in }B_1 \setminus \{0\},
\end{align}
for $n\ge 3$. In a pioneering paper \cite{Caffarelli1989AsymptoticSA}, 
Caffarelli, Gidas, and Spruck proved that any positive singular solutions 
of \eqref{eq-Yamabe-equation} in $B_1 \setminus \{0\}$ 
are asymptotic to radial singular solutions 
of \eqref{eq-Yamabe-equation} in $\R^n \setminus\{0\}$. 
Subsequently, in \cite{Korevaar1998RefinedAF}, Korevaar, Mazzeo, Pacard, and Schoen 
studied refined asymptotics and expanded solutions to the next order. 
More recently in \cite{Han2019AsymptoticEO}, 
Han, X. Li, and Y. Li studied expansions of positive singular solutions of \eqref{eq-Yamabe-equation} 
up to arbitrary orders. 

The equation \eqref{eq-LiouvilleEquation} is the special form of the Gauss curvature equation and 
has been extensively studied from different aspects. 
For example, Malchiodi \cite{Malchioldi2014, Malchioldi2020} studied the Liouville equation 
from a variational point of view. 
Refer to \cite{Bogatov-Kichenassamy2021} for a historical account and \cite{Cai&Lai2024} 
for a survey of classification results for solutions to the Liouville equation. 
In this paper, we study solutions of \eqref{eq-LiouvilleEquation} 
with an isolated singular point at the origin.


Concerning the asymptotic expansions of solutions to \eqref{eq-LiouvilleEquation}, 
Chou and Wan \cite{ChouWan1994} derived the leading order term 
and proved the following result.

\smallskip 
\noindent 
{\bf Theorem A.}
{\it Let $u$ be a smooth solution of \eqref{eq-LiouvilleEquation} in $B_1\setminus\{0\}$. 
Then, there exists a constant $\nu_0 > -2$ such that, for any $x \in B_{1/2}\setminus \{0\}$,
$$\big|u(x) - \nu_0\ln |x|\big| <C,$$
for some constant $C>0$, if and only if
\begin{align}
\label{eq-finite-exp-integral-condition}
\int_{B_\epsilon\setminus\{0\}} e^u dx < \infty,
\end{align}
for some $\epsilon > 0$.}
\smallskip 

Condition \eqref{eq-finite-exp-integral-condition} is essential for solutions 
of \eqref{eq-LiouvilleEquation} to have asymptotic radial symmetry.

Recently, Wan, Guo, and Yang \cite{GuoWanYang2021CalVar} 
established the expansions up to arbitrary orders and proved the following result.

\smallskip 
\noindent 
{\bf Theorem B.}
{\it Let $u$ be a smooth solution of \eqref{eq-LiouvilleEquation} in $B_1\setminus\{0\}$ 
satisfying \eqref{eq-finite-exp-integral-condition}. 
Then, there exist constants $\nu_0> -2$, $a \in \R$, a strictly increasing sequence $\{\mu_i\}_{i\ge 1}$, divergent to $\infty$,
and a sequence of smooth functions $\{c_i\}_{i\ge 1}$ on $\mathbb S^1$,
such that, for any positive integer $\ell$ and any $x\in B_{1/2}\setminus\{0\}$, 
\begin{align}\label{eq-expansions-arbitrary-order}
&\Big|u(x)-\nu_0\ln|x|-a -\sum_{i=1}^\ell 
c_{i}(\theta)|x|^{\mu_i}\Big|\le C|x|^{\mu_{\ell+1}},
\end{align}
where $C$ is a positive constant and $\theta = {x}/{|x|}$. 
}
\smallskip 

We point out that Theorem B is the unified version of the main result in \cite{GuoWanYang2021CalVar}. 
There are eight cases in Theorem 1.3 \cite{GuoWanYang2021CalVar}, 
depending on the range of the constant $\nu_0$. 

The estimate \eqref{eq-expansions-arbitrary-order} demonstrates a decay pattern of 
$u(x)-\nu_0\ln|x|-a$. 
The collection of decay orders $\{\mu_i\}_{i\ge 1}$ is determined by the constant $\nu_0$, 
which depends on the specific solution $u$ of \eqref{eq-LiouvilleEquation} in $B_1\setminus\{0\}$. 
The constant $a$ and the collection of coefficients $\{c_i(\theta)\}_{i\ge 1}$ of course are determined by $u$. 

To motivate our results, we examine the structure of the coefficients $\{c_{i}\}_{i\ge 1}$ 
in \eqref{eq-expansions-arbitrary-order} closely. 
In this paper, we first provide an alternative derivation of \eqref{eq-expansions-arbitrary-order}.  
Our proof of \eqref{eq-expansions-arbitrary-order} demonstrates 
that the collection of coefficients $\{c_i\}_{i\ge 1}$ is determined by the constants $\nu_0$ and $a$, up to 
a sequence of spherical harmonics. 
In fact, $\{c_i\}_{i\ge 1}$ satisfies a family of recursive equations
and is determined uniquely by  $\nu_0$, $a$,  and a sequence of spherical harmonics $\{X_i\}_{i\ge 1}$ with $\deg X_i=i$.
We can form an infinite series 
\begin{align}\label{eq-intro-infinite-series}\sum_{\ell=1}^\infty c_{\ell}(\theta)|x|^{\mu_\ell}.\end{align}
We say that {\it the asymptotics of $u(x)-\nu_0\ln|x|-a$ is determined by $\{X_i\}_{i\ge 1}$} if 
$u$ satisfies \eqref{eq-expansions-arbitrary-order} for $\{c_{i}\}_{i\ge 1}$ as determined above and any $\ell$. 
Refer to Section \ref{sec-Formal-Expansions} for details in the setting of cylindrical coordinates. 

Theorem B simply asserts that for any solution $u$ of \eqref{eq-LiouvilleEquation} in $B_1\setminus\{0\}$
satisfying \eqref{eq-finite-exp-integral-condition}, 
the asymptotics of $u(x)-\nu_0\ln|x|-a$ is determined by some collection of 
spherical harmonics $\{X_i\}_{i\ge 1}$ with $\deg X_i=i$. 

In this paper, we study the converse and aim to construct solutions of \eqref{eq-LiouvilleEquation} 
near the origin with prescribed asymptotic behaviors at the origin.   
The following theorem is our main result.

\begin{theorem}
\label{thrm-correspondence-harmonic-to-solutions-Intro}
Let $\nu_0>-2$ and $a \in \R$ be constants, and $H$ be a harmonic function near $0$ with $H(0) = 0$. 
Express $H(x)=\sum_{i\ge 1} |x|^iX_i(\theta)$ for spherical harmonics $X_i$ with $\deg X_i=i$. 
Then, 
there exists a unique smooth solution $u$ of 
\eqref{eq-LiouvilleEquation} in a punctured ball near $0$ 
such that
\begin{itemize}
    \item[(1)] $\big|u(x) - \nu_0 \ln |x| - a \big| \le C |x|^\gamma$ for some constants $C>0$ and $\gamma > 0$;
    \item[(2)] the asymptotics of $u(x) - \nu_0\ln |x| - a$ is determined by $\{X_i\}_{i\ge 1}$.
\end{itemize}
\end{theorem}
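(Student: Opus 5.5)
The plan is to work in cylindrical coordinates and reduce the statement to a fixed point problem on a half-cylinder. Setting $t=-\ln|x|$ and writing
$$u(x)=\nu_0\ln|x|+a+w(t,\theta),$$
equation \eqref{eq-LiouvilleEquation} becomes
\begin{align*}
w_{tt}+w_{\theta\theta}+e^{a}e^{-(\nu_0+2)t}e^{w}=0\quad\text{on }(T,\infty)\times\mathbb S^1 .
\end{align*}
Since $\nu_0>-2$, the forcing term decays exponentially at rate $\nu_0+2>0$. The harmonic function $H$ becomes
$$h(t,\theta)=\sum_{i\ge1}e^{-it}X_i(\theta),$$
which is harmonic on the cylinder and decays like $e^{-t}$. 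I would look for
$$w=h+v,\qquad v_{tt}+v_{\theta\theta}=-e^{a}e^{-(\nu_0+2)t}e^{h+v}=:F(v).$$
Here $v$ is required to decay and to contain no free homogeneous mode $e^{-kt}X(\theta)$ with $\deg X=k$. This normalization is exactly what ``the asymptotics is determined by $\{X_i\}$'' encodes, by the recursive scheme of Section \ref{sec-Formal-Expansions}: the spherical harmonics $X_i$ are the only free data at the resonant orders, and everything else is forced.

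Next, I would define a right inverse $G$ of $\partial_t^2+\partial_\theta^2$ mode by mode in the Fourier expansion in $\theta$. For $k=0$, take $G_0f(t)=-\int_t^\infty (s-t)f(s)\,ds$, which kills the constant and linear free terms; this fixes $a$ and $\nu_0$. For $k\ge1$, take
$$G_kf(t)=\int_t^\infty \frac{\sinh(k(s-t))}{k}\,f(s)\,ds,$$
or the variant that integrates the growing part from $T$ and the decaying part from $\infty$. The normalization is chosen so that the coefficient of the homogeneous solution $e^{-kt}$ is zero. On the space $\{v:\ \sup_{t>T} e^{\gamma t}|v|<\infty\}$ with $0<\gamma<\min(\nu_0+2,1)$ (non-resonant), $G$ is bounded. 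The map $v\mapsto G F(v)$ is a contraction for $T$ large because of the prefactor $e^{-(\nu_0+2)t}$. Its fixed point gives a solution with $|w|\le Ce^{-\gamma t}$, which is property (1). Interior estimates give smoothness.

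To verify (2), I would iterate the equation, expanding $e^{h+v}$ and applying $G$ repeatedly. This reproduces the expansion \eqref{eq-expansions-arbitrary-order}, with the $c_i$ given by the same recursion as in the formal computation and with the $e^{-kt}X_k$ components equal to those of $h$. The point to check is resonance: a source term of the form $e^{-kt}\times(\text{mode }k)$ produces $te^{-kt}$ terms. Either these cancel for the Liouville nonlinearity, or they are already accounted for in the formal expansion of Section \ref{sec-Formal-Expansions}, since the same operator appears there. In either case my construction matches the formal one term by term.

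Uniqueness is the step I expect to be the main obstacle. Suppose $u_1,u_2$ both satisfy (1) and (2) with the same data. Then their difference $\phi=w_1-w_2$ has all expansion coefficients equal, so $\phi=O(e^{-Nt})$ for every $N$. It satisfies the linear equation
$$\phi_{tt}+\phi_{\theta\theta}+c(t,\theta)\phi=0,\qquad |c|\le Ce^{-(\nu_0+2)t}.$$
I would first try to show $\phi\equiv0$ directly with the fixed point formulation. Since both $w_j-h$ decay and have vanishing homogeneous components, both are fixed points of the same contraction on $(T,\infty)$ for large $T$, hence coincide there. Unique continuation for the elliptic equation then gives agreement on the whole punctured ball. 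If the identification of the homogeneous components with zero is delicate, the fallback is a Carleman or frequency-function argument showing that a solution decaying faster than every exponential must vanish identically.
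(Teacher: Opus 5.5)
Your reformulation of (2) is correct: by Theorem \ref{thrm-asymptotic-expansion} and the structure of $P_\ell$ in Proposition \ref{prop-polynomial-coefficients}, $w=h+v$ has its asymptotics determined by $\{X_i\}$ exactly when the $\mathscr{E}_k$-component of $v$ is $o(e^{-kt})$ for every $k\ge 1$. The gap is that your contraction does not produce such a $v$.

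In the space $\{\sup_{t>T}e^{\gamma t}|v|<\infty\}$ the $k$-th Fourier mode of $v$ is only $O(e^{-\gamma t})$. So the $k$-th mode of $F(v)=-e^{a}e^{-\tau_0 t}e^{h+v}$ is only $O(e^{-(\tau_0+\gamma)t})$, and $\int_t^\infty k^{-1}\sinh(k(s-t))F_k(s)\,ds$ diverges for $k>\tau_0+\gamma$. You are then forced to use the variant: the $e^{kt}$-coefficient integrated from $\infty$ and the $e^{-kt}$-coefficient from $T$ (the reverse does not decay). The variant has a nonzero homogeneous part. For $F_k=ce^{-\nu s}$ with $0<\nu<k$ it returns
\[
\frac{c}{\nu^2-k^2}\,e^{-\nu t}+\frac{c\,e^{(k-\nu)T}}{2k(k-\nu)}\,e^{-kt}.
\]
Equivalently, it selects the decaying solution with $y'(T)=ky(T)$, which is a boundary condition at $t=T$, not an asymptotic one.

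So your fixed point satisfies (1), but its asymptotics is determined by $\{X_k+Y_k\}$. The corrections $Y_k$ depend on $T$ and on the solution, and you have no control over them. You also cannot strip off the $e^{-kt}$-coefficient inside the iteration: for a source known only to be $O(e^{-(\tau_0+\gamma)t})$, that coefficient is not even defined. The same defect breaks your first uniqueness argument, since a solution with data $\{X_k\}$ is not a fixed point of your $GF$. Your fallback, strong unique continuation for $\Delta\phi=|x|^{\tau_0-2}c\,\phi$ with $\phi=O(|x|^N)$ for all $N$, is exactly what the paper does. (A minor slip: $-\int_t^\infty(s-t)f\,ds$ solves $y''=-f$, so the sign should be $+$.)

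The missing idea is that, apart from $h$, the degree-$k$ part of the solution decays like $e^{-(\tau_0+k)t}$. In the paper this is the statement that $P_\alpha$ has order at most $\langle\alpha,\Z_+\rangle<\langle\alpha,\rho\rangle$, together with the uniform gap $\langle\alpha,\rho\rangle^2-\langle\alpha,\Z_+\rangle^2\ge\rho_0^2$. It is also why no $te^{-kt}$ terms ever arise, so your resonance discussion is moot. The paper turns this into $\|P_\alpha\|_*\le\rho_0^{-2}\|R_\alpha\|_*$ and sums $\sum_\alpha P_\alpha e^{-\langle\alpha,\rho\rangle t}$ using the majorant Lemma \ref{lemma-majorant-series}, so the normalization is built into the construction.

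Your contraction can be repaired in the same spirit by putting this decay into the norm. For example, take $\sum_{k\in\Z}\sup_{t\ge T}e^{\tau_0 t+|k|(t-t_1)}|\hat v_k(t)|$, where $\hat v_k$ are the Fourier coefficients in $\theta$, $\|X_i\|_{L^2}\le\tilde A^i$, $e^{t_1}>\tilde A$, and $T\ge t_1$. The corresponding norm without the factor $e^{\tau_0 t}$ is a Banach algebra in which $h$ has finite norm. The integral $\int_t^\infty$ then converges in every mode, with bounds uniform in $k$. The fixed point has $\mathscr{E}_k$-component $O(e^{-(\tau_0+k)t})$ for every $k$, so Theorem \ref{thrm-asymptotic-expansion} identifies its data as $\{X_k\}$.
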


Conclusion (1) asserts that $\nu_0 \ln |x| + a$ provides an initial approximation of $u(x)$ 
and (2) asserts that the infinite series in \eqref{eq-intro-infinite-series} 
describes completely the asymptotic behavior of $u(x) - \nu_0 \ln |x| - a$. 
In fact, we will prove that the infinite series in \eqref{eq-intro-infinite-series} 
converges uniformly and absolutely in a small punctured ball centered at the origin. 
Refer to Theorem \ref{thrm-main-theorem-cylindrical-coords}
for a more detailed version for \eqref{eq-LiouvilleEquation} in cylindrical coordinates.

We now briefly discuss how to prove
Theorem \ref{thrm-correspondence-harmonic-to-solutions-Intro}, or more specifically, 
the convergence of the infinite series in \eqref{eq-intro-infinite-series}. 
The basic idea is as follows. 
With the given sequence of spherical harmonics $\{X_i\}_{i\ge 1}$ 
in Theorem \ref{thrm-correspondence-harmonic-to-solutions-Intro}, each $c_\ell$ is determined by prior 
$c_1, \cdots, c_{\ell-1}$. 
We will derive a recursive estimate of $c_\ell$ 
in terms of $c_1, \cdots, c_{\ell-1}$ in an appropriate Sobolev norm and then investigate whether such 
recursive estimates permit us to prove the convergence of the series in \eqref{eq-intro-infinite-series}. 
In our formulation of Theorem \ref{thrm-correspondence-harmonic-to-solutions-Intro}, 
the collection of the spherical harmonics $\{X_i\}_{i\ge 1}$ is considered to be ``free data". 
However, the dependence of $\{c_\ell\}_{\ell\ge 1}$ on $\{X_i\}_{i\ge 1}$ is not clear. 
Some $c_\ell$ is given by a single $X_i$, some $c_\ell$ is given by 
the unique solution of some recursive equation, while other $c_\ell$ is $X_i$ 
corrected by such a unique solution. 
The lack of the clear dependence makes it difficult to derive effective estimates 
leading to the convergence of \eqref{eq-intro-infinite-series}. 
Refer to Proposition \ref{prop-polynomial-coefficients} for details. 

For a remedy, we employ  
multi-indices with countably many components to reformulate the series 
\eqref{eq-intro-infinite-series}. 
Specifically, for $i\ge 0$, denote by $e_i \in \Z_+^\infty$ the integer sequence 
whose $i$-th component is $1$ and all the other components are $0$, and then define
$$\varLambda = \Z_+\text{-}\text{span}\{ e_i;\,i\geq 0\} \setminus\{0\}.$$
In other words, $\varLambda$ consists of all non-zero sequences $\alpha = \{\alpha_i\}_{i\geq 0}$ 
such that $\alpha_i \in \Z_+$ for any $i\geq 0$ and $\alpha_i = 0$ for all but finitely many $i$. 
We emphasize that there are only finitely many nonzero components in each $\alpha \in \varLambda$. 
This is the reason we introduce a new notation $\varLambda$ instead of using $\Z_+^{\infty}\setminus\{0\}$.
For any $\alpha \in \varLambda$, we write
\begin{align*}
|\alpha| = \sum_{i=0}^\infty \alpha_i.
\end{align*}
This is a finite sum since $\alpha_i = 0$ for all but finitely many $i$.

With such multi-indices, we define $P_{e_0} = -e^a/(\nu_0+2)^2$  
and $P_{e_i} = X_i(\theta)$ for $i\geq 1$ for the given 
sequence of spherical harmonics $\{X_i\}_{i\ge 1}$, considered as given data.  
We reformulate the series in \eqref{eq-intro-infinite-series} as 
\begin{align}\label{eq-intro-infinite-series-reformulated-multi}
\sum_{\alpha\in \varLambda} c_\alpha(\theta)|x|^{\mu_\alpha},\end{align}
where $\{\mu_\alpha\}_{\alpha\in \varLambda}$ is the index set $\{\mu_\ell\}_{\ell\ge 1}$ 
reparametrized by $\varLambda$ and each $c_\alpha$ is determined by 
prior $c_\beta$ with $|\beta|<|\alpha|$. 
We need to point out there are infinitely many $\beta\in \varLambda$ with $|\beta|=j$ for any fixed $j\ge 1$. 
However, each $c_\alpha$ in \eqref{eq-intro-infinite-series-reformulated-multi} 
is determined by only finitely many prior $c_\beta$'s. 
Refer to Proposition \ref{prop-formal-expansion-multi-index} for details. 

At the first glance, it seems much more complicated to parametrize the series 
by infinite dimensional multi-indices as in \eqref{eq-intro-infinite-series-reformulated-multi}
than by positive integers as in \eqref{eq-intro-infinite-series}. 
The fact is that such a reparametrization has significant advantages. 
Among them, there is a much clearer dependence of $c_\alpha$ on $c_\beta$'s with $|\beta|<|\alpha|$ 
in the multi-index setting. 
In particular, when we expand $c_\alpha$ in terms of spherical harmonics, 
we are able to characterize the highest degree of spherical harmonics in $c_\alpha$ 
in terms of $\alpha$ explicitly. 

To end the introduction, we compare briefly the equations 
\eqref{eq-LiouvilleEquation} and \eqref{eq-Yamabe-equation}. 
Obviously, both equations are nonlinear in $u$ only, 
with \eqref{eq-LiouvilleEquation} in an exponential form 
and \eqref{eq-Yamabe-equation} in a power form. 
The exponential nonlinear term in \eqref{eq-LiouvilleEquation} 
generates an exponential decay factor in the cylindrical coordinates, 
as \eqref{eq-linearized-equation} illustrates. 
Such an exponential decay factor plays an essential role in our study. 
It easily eliminates the so-called {\it small divisors} 
in the derivation of the recursive estimates of $\{c_\alpha\}$. 
In contrast, the equivalent equation of \eqref{eq-Yamabe-equation}
in cylindrical coordinates does not have exponential decay factors, 
as shown in \cite{Caffarelli1989AsymptoticSA}, \cite{Korevaar1998RefinedAF}, 
and \cite{Han2019AsymptoticEO}. 
The elimination of the small divisors becomes highly nontrivial. 
We will study \eqref{eq-Yamabe-equation} in a forthcoming paper. 

The paper is organized as follows. 
In Section \ref{sec-Linear-Equations}, 
we rewrite \eqref{eq-LiouvilleEquation} in cylindrical coordinates 
and study the linearized equation. 
In Section \ref{sec-Formal-Expansions}, we study formal expansions of 
positive solutions of \eqref{eq-LiouvilleEquation} in terms of sequences of 
spherical harmonics. 
In Section \ref{sec-Asymp-Expansions}, we provide an alternative proof of Theorem B 
using our characterization in Section \ref{sec-Linear-Equations} and Section \ref{sec-Formal-Expansions}. 
In Section \ref{sec-Reparametrizations}, we reparametrize the formal expansions 
in terms of multi-indices with countably many components. 
In Section \ref{sec-Convergence-of-the-Expansions}, we prove the convergence of the formal expansions.

\section{Linear Equations}\label{sec-Linear-Equations}

In this section, we formulate the Liouville equation over cylindrical coordinates and study the corresponding linear equations. 

Following \cite{Caffarelli1989AsymptoticSA}, \cite{Korevaar1998RefinedAF}, and \cite{Han2019AsymptoticEO}, we introduce cylindrical coordinates $(t,\theta) \in [0,\infty) \times \mathbb{S}^1$ given by
$$t = -\ln |x|,\,\,\,\theta = \frac{x}{|x|}.$$
Let $u$ be a smooth solution to \eqref{eq-LiouvilleEquation} satisfying \eqref{eq-finite-exp-integral-condition} such that
$$u(x) - \nu_0 \ln |x| - a \rightarrow 0\quad\text{as }x \rightarrow 0,$$
for some $\nu_0 > -2$ and $a \in \R$. 
Set 
$$v(t,\theta) = u(x) - \nu_0\ln |x| - a.$$
It is easy to verify that $v$ satisfies
\begin{align}
\label{eq-linearized-equation}
    \partial_{tt} v + \partial_{\theta\theta} v  = e^{-\tau_0 t}f(v),
\end{align}
where $\tau_0 = 2 + \nu_0 > 0$ and
\begin{align*}
f(s) = - e^{a + s}.
\end{align*}
The specific expression of $f$ plays no roles. Throughout this paper, 
we assume that $f$ in \eqref{eq-linearized-equation} is a smooth or analytic function.

For simplicity, we write
\begin{align}
\label{eq-linear-operator}
L = \partial_{tt} + \partial_{\theta\theta}.
\end{align} 
Let $\{\lambda_i \}_{i\ge 0}$ be the sequence of distinct nonnegative eigenvalues 
of $-\partial_{\theta\theta}$, i.e., for $i\ge 0$,
\begin{align*}
\lambda_i= i^2.
\end{align*}
Note that we do not count multiplicity. 
In addition, we set, for $i\ge 0$, 
\begin{align*}
\text{$\mathscr{E}_i=$ the eigenspace of $-\partial_{\theta\theta}$ 
corresponding to $\lambda_i$.}
\end{align*}
Specifically, we set  
$$\mathscr{E}_0 = \{\text{constants}\},$$
and, for any $i \ge 1$,
$$\mathscr{E}_i = \R\text{-span}\{\cos(i\theta),\, \sin(i\theta)\}.$$
We point out that $\partial_{\theta\theta}$ is a differential operator on $\mathbb{S}^1$.

It is convenient to introduce the following terminology.

\begin{definition}\label{def-generalized-polynomial}
We say that $R \in C^\infty(\mathbb S^1)$ has a finite order if there exists a nonnegative integer $N$ such that
$$R(\theta) = \sum_{i=0}^N X_i (\theta),$$
for some $X_i \in \mathscr{E}_i$, $i = 0,\cdots, N$. The smallest possible $N$ is called the order of $R$.
\end{definition}

We first consider a special class of linear equations. Let $\mu>0$ be a constant. Consider
\begin{align*}
L (P(\theta) e^{-\mu t}) = R(\theta) e^{-\mu t},
\end{align*}
where $P$ and $R$ are smooth functions on $\mathbb S^1$. 
A simple computation yields
\begin{align}
\label{eq-linear-equation-mu}
\partial_{\theta\theta} P + \mu^2 P = R.
\end{align}
Note that \eqref{eq-linear-equation-mu} does not always have a solution. 
In the next result, we summarize two important cases that \eqref{eq-linear-equation-mu} is solvable. 
The orthogonal complement is taken with respect to the standard
inner product on $L^2(\mathbb{S}^1)$.

\begin{lemma}\label{lemma-polynomial_solution}
Let $\mu > 0$ be a constant and $R \in C^\infty (\mathbb S^1)$. 
\begin{itemize}
\item[(i)] If $\mu$ is not an integer, then 
\eqref{eq-linear-equation-mu} admits a unique solution $P \in C^\infty(\mathbb S^1)$.
\item[(ii)] 
If $\mu$ is an integer and $R \in \mathrm{Ker}\,(\partial_{\theta\theta} + \mu^2)^\perp$, 
then \eqref{eq-linear-equation-mu} admits a unique solution
$Q \in \mathrm{Ker}\,(\partial_{\theta\theta} + \mu^2)^\perp$, and
any solution $P$ of \eqref{eq-linear-equation-mu} can be written as 
$$P=X+Q,$$ 
for some $X \in \mathscr{E}_\mu$. 
\end{itemize}
Moreover, if $R$ has a finite order,  with its order strictly less than $\mu$ in $\mathrm{(ii)}$,
then the solutions $P$ in $\mathrm{(i)}$ and
$Q$ in $\mathrm{(ii)}$ have the same order as $R$.
\end{lemma}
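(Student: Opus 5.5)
We will argue entirely through Fourier series on $\mathbb S^1$, which diagonalizes $\partial_{\theta\theta}$. Write $R(\theta)=\sum_{k\ge 0} R_k(\theta)$ with $R_k\in\mathscr{E}_k$; since $R$ is smooth, the $L^2$ norms $\|R_k\|$ decay faster than any power of $k$. On $\mathscr{E}_k$ the operator $\partial_{\theta\theta}+\mu^2$ acts as multiplication by $\mu^2-k^2$. Hence \eqref{eq-linear-equation-mu} is equivalent to the decoupled system
\begin{align*}
(\mu^2-k^2)P_k=R_k \quad\text{for all } k\ge 0,
\end{align*}
where $P=\sum_k P_k$ with $P_k\in\mathscr{E}_k$.

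For (i), $\mu\notin\Z$ gives $\mu^2-k^2\neq 0$ for every $k$, so the only candidate is $P_k=R_k/(\mu^2-k^2)$, which already proves uniqueness. For existence and smoothness, note that $|\mu^2-k^2|\ge c>0$ for all $k$, with $c$ depending on $\mu$ (the distance of $\mu$ to the nearest integer), and that $|\mu^2-k^2|\ge k^2/2$ for $k$ large. So $\|P_k\|\le C\|R_k\|$, and the rapid decay of the Fourier components passes from $R$ to $P$. Thus $P\in C^\infty(\mathbb S^1)$; equivalently, $P\in H^s$ for every $s$, and Sobolev embedding gives smoothness. This uniform lower bound on the divisor is the only real point, and it is what we expect to be the main (mild) obstacle.

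For (ii), with $\mu=m\in\Z$, the kernel of $\partial_{\theta\theta}+m^2$ is exactly $\mathscr{E}_m$. The hypothesis $R\perp\mathscr{E}_m$ says $R_m=0$, so the equation for $k=m$ reads $0\cdot P_m=0$, which leaves $P_m$ free. For $k\ne m$ we set $Q_k=R_k/(m^2-k^2)$ and $Q_m=0$. Then $|m^2-k^2|\ge 1$ for $k\ne m$, and the same decay argument as in (i) shows $Q$ is smooth. Moreover $Q$ is a solution lying in $\mathscr{E}_m^\perp$, and it is the unique such solution, because the components $k\ne m$ are forced and the component $k=m$ must vanish. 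Any other solution $P$ satisfies $P-Q\in\mathrm{Ker}\,(\partial_{\theta\theta}+m^2)=\mathscr{E}_m$, so $P=X+Q$ with $X\in\mathscr{E}_m$.

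For the finite-order statement, suppose $R$ has order $N$, so $R_k=0$ for $k>N$ and $R_N\ne0$. In case (i) we get $P_k=0$ for $k>N$ and $P_N=R_N/(\mu^2-N^2)\ne0$, so $P$ has order $N$. In case (ii), with $N<\mu$, the index $m$ exceeds $N$, so $Q_m=0$ is consistent. We again get $Q_k=0$ for $k>N$ and $Q_N\ne0$, so $Q$ has order $N$.
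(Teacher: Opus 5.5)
Your proof is correct and is essentially the paper's argument. The paper handles (i) and (ii) by citing the Fredholm alternative, which you carry out explicitly in Fourier modes on $\mathbb S^1$, with your divisor bounds $|\mu^2-k^2|\ge c$ giving smoothness directly; for the finite-order claim the paper uses exactly your formula $P=\sum_{i=0}^N(\mu^2-i^2)^{-1}X_i$, and your only slip is that ``$R_N\neq 0$'' fails for $R\equiv 0$, where $P=Q=0$ still has order $0$, so nothing breaks.
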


\begin{proof} 
Note that $\mathrm{Ker}\,(\partial_{\theta\theta} + \mu^2) \ne 0$ 
if and only if $\mu$ is an integer. 
Hence, (i) and (ii) follow from the Fredholm alternative.

For the last assertion, assume that $R$ can be written as
$$R = \sum_{i=0}^N X_i,$$
for some $X_i \in \mathscr{E}_i$, $i = 0,\cdots, N$. 
Under the assumption in (i) or (ii), $\mu \ne i$ for $i = 0,\cdots, N$.
Hence, $P$ in (i) and $Q$ in (ii) are given by
$$\sum_{i=0}^N (\mu^2 - i^2)^{-1}X_i.$$
This yields the desired result. 
\end{proof}

Next, we discuss decay estimates for linear equations. 
Consider
\begin{align}
\label{eq-linear-equation-of-L}
Lv = \varphi\quad\text{in }[t_0,\infty)\times\mathbb{S}^1,
\end{align}
where $L$ is given by \eqref{eq-linear-operator}. 
In \cite{Han2019AsymptoticEO}, the following result is proved.

\begin{lemma}
\label{lemma-same-decay-estimate-for-L}
Let $\gamma > 0$, $k\geq 0$ be an integer, and $\varphi$ be a smooth function 
on $[t_0,\infty)\times\mathbb{S}^1$ for some $t_0>0$ satisfying, 
for any $(t_,\theta) \in [t_0,\infty)\times \mathbb{S}^1$,
$$|\varphi(t,\theta)| \leq C_0t^k e^{-\gamma t},$$
for some positive constant $C_0$. 
Let $v$ be a solution of \eqref{eq-linear-equation-of-L} on $[t_0,\infty)\times \mathbb{S}^1$ 
such that $v(t,\theta) \rightarrow 0$ as $t \rightarrow \infty$, uniformly in $\theta \in \mathbb{S}^1$.
\begin{itemize}
\item[(i)] If $\gamma \leq 1$, then there exists a constant $C>0$ such that, 
for any $(t,\theta) \in [t_0,\infty)\times \mathbb{S}^1$,
$$\left|v(t,\theta)\right| \leq 
\begin{cases} 
Ct^k e^{-\gamma t} & \text{if }  \gamma < 1,\\
C t^{k+1} e^{-\gamma t}& \text{if } \gamma = 1.
\end{cases}$$
\item[(ii)] If $\ell < \gamma \leq \ell+1$ for some integer $\ell \geq 1$, 
then there exist a constant $C>0$ and spherical harmonics $X_1 \in \mathscr{E}_1,\cdots,X_\ell \in \mathscr{E}_\ell$  
such that, for any $(t,\theta) \in [t_0,\infty)\times \mathbb{S}^1$,
$$\Big|v(t,\theta) - \sum_{i=1}^\ell X_i(\theta) e^{-i t} \Big| \leq 
\begin{cases} 
Ct^k e^{-\gamma t} & \text{if }  \ell < \gamma < \ell+1,\\
C t^{k+1} e^{-\gamma t}& \text{if } \gamma = \ell+1.
\end{cases}$$        
\end{itemize}
\end{lemma}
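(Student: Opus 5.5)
We prove Lemma~\ref{lemma-same-decay-estimate-for-L} by expanding in Fourier modes on $\mathbb S^1$ and solving the resulting ODEs in $t$ explicitly. Write
\[
v(t,\theta)=\sum_{i\ge 0} v_i(t,\theta), \qquad \varphi(t,\theta)=\sum_{i\ge0}\varphi_i(t,\theta),
\]
where $v_i(t,\cdot),\varphi_i(t,\cdot)\in\mathscr E_i$ are the $L^2(\mathbb S^1)$ projections. Then \eqref{eq-linear-equation-of-L} decouples into
\[
v_i''-i^2v_i=\varphi_i,
\]
with $\|\varphi_i(t)\|_{L^2(\mathbb S^1)}\le C t^k e^{-\gamma t}$ uniformly in $i$. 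Also $\|\varphi(t)\|_{L^2}^2=\sum_i\|\varphi_i(t)\|_{L^2}^2\le Ct^{2k}e^{-2\gamma t}$. Since $v\to0$ uniformly, $\|v_i(t)\|_{L^2}\to0$ for every $i$.

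\textbf{Step 1: the mode $i=0$.} The homogeneous solutions are $A+Bt$, so decay forces
\[
v_0(t)=\int_t^\infty (s-t)\varphi_0(s)\,ds,
\]
which is $O(t^ke^{-\gamma t})$.

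\textbf{Step 2: the modes $i\ge1$.} The homogeneous solutions are $e^{\pm it}$. Variation of parameters together with $v_i\to0$ gives
\[
v_i(t)=c_ie^{-it}-\frac1{2i}\Big[e^{-it}\int_{t_0}^t e^{is}\varphi_i(s)\,ds+e^{it}\int_t^\infty e^{-is}\varphi_i(s)\,ds\Big].
\]
The second integral is always $O(t^ke^{-\gamma t}/i)$.

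For the first integral, split by the size of $i$ relative to $\gamma$:
\begin{itemize}
\item If $i>\gamma$, it is bounded by $Ct^ke^{(i-\gamma)t}/(i-\gamma)$, so after multiplying by $e^{-it}$ it is again $O(t^ke^{-\gamma t})$.
\item If $i<\gamma$, write $\int_{t_0}^t=\int_{t_0}^\infty-\int_t^\infty$. The piece $\int_t^\infty$ is $O(t^ke^{-\gamma t})$ after the factor $e^{-it}$. The convergent constant $\int_{t_0}^\infty$ is absorbed into the coefficient of $e^{-it}$, which defines $X_i\in\mathscr E_i$.
\item If $i=\gamma$, the integral grows like $t^{k+1}$; this is the source of the extra factor of $t$.
\end{itemize}

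\textbf{Step 3: the free coefficients $c_i$ for $i>\gamma$.} These are not yet controlled, and they are the real issue. On the slab $[t_0,t_0+1]$ the function $v$ is bounded, so $\sum_i\|v_i(t_0)\|^2<\infty$. Evaluating the formula at $t=t_0$ gives $c_i e^{-it_0}=v_i(t_0)+O(1/i)$, with the $O(1/i)$ bound in $\ell^2$ after summation. Hence $\sum_{i>\gamma}|c_i|^2e^{-2it_0}<\infty$, and for $t\ge t_0+1$,
\[
\Big\|\sum_{i>\gamma}c_ie^{-it}\Big\|\le Ce^{-(\ell+1)(t-t_0)}\le Ce^{-\gamma t}.
\]
Here $\ell+1$ is the smallest integer exceeding $\gamma$; it is at least $\gamma$, and the inequality is fine even when $\ell+1=\gamma$, since that mode was treated in the resonant case. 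On $[t_0,t_0+1]$ the estimate is trivial.

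\textbf{Step 4: summing the modes.} We must sum the tail estimates over $i$ uniformly. For $i>2\gamma$ we have $1/(i(i-\gamma))\lesssim i^{-2}$, so the $\ell^2$ sum over modes gives
\[
\Big\|v(t)-\sum_{i\le \ell}X_ie^{-it}\Big\|_{L^2(\mathbb S^1)}\le Ct^{k(+1)}e^{-\gamma t}.
\]
More simply, one can note that the contributions from $\varphi$ in the high modes are bounded by $\|\varphi(s)\|_{L^2}$ through Cauchy--Schwarz.

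\textbf{Step 5: from $L^2$ to pointwise.} Apply interior elliptic estimates to $w=v-\sum_{i\le\ell}X_ie^{-it}$, which satisfies $Lw=\varphi$, on unit cylinders $[t-1,t+1]\times\mathbb S^1$. This gives
\[
\sup|w|\le C\big(\|w\|_{L^2}+\sup|\varphi|\big)
\]
on the half-size cylinder, which yields the pointwise bound. In case (i) the sum $\sum_{i\le\ell}$ is empty.

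The main obstacle is Step 3, together with the uniformity in $i$ in Step 4. We handle both by working in $L^2$ on each slice and using the boundedness of $v$ at $t_0$, rather than estimating modes pointwise one at a time.
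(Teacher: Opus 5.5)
Your proof is correct and complete. The paper does not prove this lemma; it cites \cite{Han2019AsymptoticEO}. So there is no in-paper argument to compare against.

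Judged on its own, your argument is a standard spectral proof of such decay lemmas and needs no further ingredients. Its steps are:
\begin{itemize}
\item project onto $\mathscr E_i$ and solve $v_i''-i^2v_i=\varphi_i$ by variation of parameters;
\item use $v\to0$ to discard $e^{it}$ for $i\ge1$, and $A+Bt$ for $i=0$;
\item for $i<\gamma$, read off $X_i=c_i-\frac{1}{2i}\int_{t_0}^\infty e^{is}\varphi_i(s)\,ds$;
\item attribute the extra factor $t$ to the single resonant mode $i=\gamma$;
\item for $i>\gamma$, square-sum the per-mode bounds $O\big(t^ke^{-\gamma t}/(i(i-\gamma))\big)$;
\item pass from slice-wise $L^2$ bounds to pointwise bounds by interior estimates for $Lw=\varphi$ on unit cylinders.
\end{itemize}
The one infinite-dimensional issue is the free coefficients $c_i$ for $i>\gamma$. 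You handle it correctly: since the correction term vanishes at $t=t_0$, you get $\sum_{i}\|c_i\|^2e^{-2it_0}<\infty$ from $\|v(t_0)\|_{L^2}<\infty$.

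Two small clean-ups in Step 3:
\begin{itemize}
\item The remark about ``$\ell+1$'' is muddled. When $\gamma=\ell+1$, the smallest integer exceeding $\gamma$ is $\ell+2$, not $\gamma$. All you need is $e^{-i(t-t_0)}\le e^{-\gamma(t-t_0)}$ for every $i>\gamma$. This gives $\|\sum_{i>\gamma}c_ie^{-it}\|_{L^2}\le Ce^{-\gamma t}$ for all $t\ge t_0$, with no need to restrict to $t\ge t_0+1$.
\item The correction in $c_ie^{-it_0}=v_i(t_0)+\cdots$ is actually $O(i^{-2})$ in norm, which is more than enough for the $\ell^2$ summation.
\end{itemize}
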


By Lemma \ref{lemma-same-decay-estimate-for-L}, 
if $v$ is a solution of \eqref{eq-linearized-equation} such that 
$v(t,\theta)\rightarrow 0$ as $t \rightarrow \infty$, uniformly in $\theta \in \mathbb{S}^1$,
then 
$$|v(t,\theta)| \le C e^{-\gamma t},$$
for some constants $C>0$ and $\gamma > 0$ with $\gamma < \min\{1,\tau_0\}$. 
Throughout this paper, we always consider exponential decay solutions of \eqref{eq-linearized-equation}.

\section{Formal Expansions}\label{sec-Formal-Expansions}

In this section, we construct formal expansions 
from a collection of spherical harmonics. 
There are two purposes for such formal expansions.
First, we construct an index set that provides the correct exponential decay orders.
Second, we derive recursive equations characterizing the terms in the expansion.
We need to point out that no results in later sections are proved based on 
the assumption of the formal expansions. 

We first introduce the index set that provides decay orders in the asymptotic expansions.
Let $\tau_0$ be the positive constant in \eqref{eq-linearized-equation}. Set
$$\rho_0 = \tau_0,$$
and, for each $i \ge 1$,
$$\rho_i = i.$$
Note that $\{\rho_i\}_{i\ge 0}$ may not be in increasing order. 
In fact, if $\tau_0$ is an integer, then $\rho_0 = \rho_i$ for some $i \ge 1$.
Throughout the paper, we denote by $\Z_+$ the collection of nonnegative integers. 

\begin{definition}\label{def-index-set}
The {\it index set} $\mathcal I$ associated with the operator $L$ in \eqref{eq-linear-operator} and $\tau_0 > 0$
is the collection 
of all nontrivial linear combinations of $\{\rho_i\}_{i \geq 0}$ with nonnegative integer coefficients, i.e., 
\begin{align*}
\mathcal I=\Big\{\sum_{i\geq 0}m_i\rho_i ;\, m_i 
\in \mathbb Z_+ \text{ with finitely many }m_i > 0\Big\}.
\end{align*}
Denote by $\mathcal I=\{\mu_\ell\}_{\ell\ge 1}$ a strictly increasing sequence of positive constants. 
\end{definition}

We now perform a formal computation to illustrate that $\mathcal I$ provides all exponential decay orders. 
Let $\{\mu_\ell\}_{\ell\geq 1}$ be the index set as in Definition \ref{def-index-set}. 
Assume $v$ is a solution of \eqref{eq-linearized-equation} on $[t_0,\infty)\times\mathbb{S}^1$ for some $t_0 \ge 0$ 
such that $v(t,\theta)$ converges to $0$ exponentially 
as $t\to\infty$, uniformly in $\theta \in \mathbb{S}^1$.
Formally, we write 
\begin{equation}\label{eq-formal-expansion}v(t,\theta)= 
\sum_{\ell=1}^\infty P_\ell(\theta) e^{-\mu_\ell t},\end{equation}
where $\{P_\ell\}_{\ell \geq 1}$ is a sequence of smooth functions on $\mathbb S^1$. 
Next, we expand $f$ in \eqref{eq-linearized-equation} formally by its Taylor series at $0$ 
$$f(s)=\sum_{j=0}^\infty a_j s^j.$$
By a formal term-by-term differentiation in \eqref{eq-formal-expansion}, a simple substitution, and rearrangements, we obtain
\begin{equation}
\label{eq-formal-computation}
\begin{aligned}
&\sum_{\ell=1}^\infty \big( \partial_{\theta\theta} P_\ell + \mu_\ell^2 P_\ell  \big) 
e^{-\mu_\ell t} \\
&\quad = a_0 e^{-\rho_0 t} 
+ \sum_{\ell = 1}^\infty \Big( \sum_{j=1}^\ell a_j \sum_{\mu_{k_1} + \cdots + \mu_{k_j} = \mu_\ell} P_{k_1}\cdots P_{k_j} \Big) 
e^{-(\mu_\ell + \rho_0) t}.
\end{aligned}
\end{equation}
We point out that the last summation is over all possible $(k_1,\cdots,k_j) \in \N^j$. 
Note that $\{\rho_0,\,\rho_0+\mu_1,\,\rho_0 + \mu_2,\,\cdots\}$ is strictly increasing.
We emphasize that each $P_\ell$ is assumed to be a smooth function on $\mathbb S^1$. 

First, recall the following decomposition lemma for spherical harmonics.

\begin{lemma}
\label{lemma-decomposition-of-spherical-harmonics}
Let $X\in \mathscr{E}_{m_1}$ and $Y \in \mathscr{E}_{m_2}$. 
Then, $XY \in \bigoplus_{i=0}^{m_1+m_2} \mathscr{E}_i$.
\end{lemma}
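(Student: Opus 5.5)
We will argue through homogeneous harmonic polynomials, so that the argument does not depend on $\mathbb S^1$ being one-dimensional and applies verbatim to eigenspaces of the Laplacian on $\mathbb S^{n-1}$. In the paper's setting the eigenspace $\mathscr{E}_m$ of $-\partial_{\theta\theta}$ for $\lambda_m = m^2$ is exactly the space of restrictions to the unit sphere of homogeneous harmonic polynomials of degree $m$ (in two variables, $\cos(m\theta)$ and $\sin(m\theta)$ are the restrictions of $\mathrm{Re}\,(x_1+ix_2)^m$ and $\mathrm{Im}\,(x_1+ix_2)^m$). In general dimension we take this characterization as the definition of spherical harmonics of degree $m$. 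So we write $X = p|_{\mathbb S}$ and $Y = q|_{\mathbb S}$, where $p,q$ are homogeneous harmonic polynomials of degrees $m_1$ and $m_2$. Then $XY = (pq)|_{\mathbb S}$, and $pq$ is a homogeneous polynomial of degree $d = m_1+m_2$, which in general is not harmonic.

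The key step is the classical Gauss (Fischer) decomposition. Let $\mathcal P_d$ be the homogeneous polynomials of degree $d$ and $\mathcal H_d \subseteq \mathcal P_d$ the harmonic ones. We claim that
\begin{align*}
\mathcal P_d = \mathcal H_d \oplus |x|^2 \mathcal P_{d-2}.
\end{align*}
To prove this, equip $\mathcal P_d$ with the Fischer inner product $\langle f, g\rangle = f(\partial)\bar g$. Under this inner product, multiplication by $|x|^2$ from $\mathcal P_{d-2}$ to $\mathcal P_d$ has adjoint $\Delta$. Hence the orthogonal complement of $|x|^2\mathcal P_{d-2}$ in $\mathcal P_d$ is $\ker \Delta \cap \mathcal P_d = \mathcal H_d$, which gives the decomposition.

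Iterating the decomposition, we obtain
\begin{align*}
pq = h_d + |x|^2 h_{d-2} + |x|^4 h_{d-4} + \cdots, \qquad h_j \in \mathcal H_j.
\end{align*}
Restricting to the sphere, where $|x| = 1$, gives $XY = \sum_{j} h_j|_{\mathbb S}$, with $j$ running over $d, d-2, \ldots \ge 0$. Each $h_j|_{\mathbb S}$ lies in $\mathscr{E}_j$, so $XY \in \bigoplus_{i=0}^{m_1+m_2}\mathscr{E}_i$. In fact this shows more: only degrees of the same parity as $m_1+m_2$ occur.

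The only nontrivial point is the identification of spherical harmonics with restrictions of $\mathcal H_m$ together with the adjointness computation behind the Gauss decomposition; everything else is bookkeeping. As a consistency check on $\mathbb S^1$, the product-to-sum formulas express $XY$ through $\cos((m_1\pm m_2)\theta)$ and $\sin((m_1\pm m_2)\theta)$, which lie in $\mathscr{E}_{m_1+m_2}\oplus\mathscr{E}_{|m_1-m_2|}$, in agreement with the general statement.
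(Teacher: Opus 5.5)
Your argument is correct, but it takes a different route from the paper. The paper says the lemma is trivial on $\mathbb S^1$ and follows from the product-to-sum formulas, which is exactly what you relegate to a ``consistency check'' at the end. Writing $X=a\cos(m_1\theta)+b\sin(m_1\theta)$ and $Y=c\cos(m_2\theta)+d\sin(m_2\theta)$ and expanding gives $XY\in\mathscr{E}_{m_1+m_2}\oplus\mathscr{E}_{|m_1-m_2|}$ in one line. This is sharper than what the Gauss decomposition yields: your argument allows components in every degree $m_1+m_2, m_1+m_2-2,\dots$, and excluding degrees below $|m_1-m_2|$ would need an extra orthogonality argument.

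What your route buys is dimension independence. It works verbatim on $\mathbb S^{n-1}$, where no trigonometric identities are available, so it is the version one would need for the Yamabe analogue the authors defer to a later paper. It also exhibits the parity structure. The price is heavier machinery: the Fischer inner product and the identification of $\mathscr{E}_m$ with restrictions of $\mathcal H_m$.

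On that identification, note which direction each step uses. Your final step needs $\mathcal H_j|_{\mathbb S^1}\subseteq\mathscr{E}_j$, but your parenthetical remark only justifies the reverse inclusion, namely that $\cos(m\theta),\sin(m\theta)$ are restrictions of $\mathrm{Re}\,(x_1+ix_2)^m$ and $\mathrm{Im}\,(x_1+ix_2)^m$. The missing direction is one line. In polar coordinates, $\Delta\big(r^jY(\theta)\big)=r^{j-2}\big(Y''+j^2Y\big)$, so harmonicity of $r^jY$ forces $Y\in\mathscr{E}_j$. On $\mathbb S^{n-1}$ the same computation gives the eigenvalue $j(j+n-2)$.
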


Here, the notation $Z \in \bigoplus_{i=0}^N \mathscr{E}_i$, $N \in \Z_+$, 
means that $Z$ can be written as a linear combination of functions in 
$\mathscr{E}_0, \cdots, \mathscr{E}_N$. 
We point out that Lemma \ref{lemma-decomposition-of-spherical-harmonics} is trivial for 
$\mathbb S^1$
and follows from well-known trigonometric formulas.

By Lemma \ref{lemma-polynomial_solution} and a simple comparison of decay orders, we obtain the following conclusion. 

\begin{proposition}\label{prop-polynomial-coefficients}
Let $\{\mu_\ell\}_{\ell\geq 1}$ be the index set associated with $L$ and $\tau_0>0$ as in Definition \ref{def-index-set} 
and $v$ be a solution 
of \eqref{eq-linearized-equation} on $[t_0,\infty)\times\mathbb{S}^1$ for some $t_0\ge 0$ 
such that $v$ converges to $0$ exponentially as 
$t \to \infty$, uniformly in $\theta \in \mathbb{S}^1$. 
Assume that $v$ is given by \eqref{eq-formal-expansion} formally 
for a sequence $\{P_\ell\}_{\ell\geq 1}$ of smooth functions on $\mathbb S^1$. Then, 
there exists a sequence of spherical harmonics $\{X_i\}_{i \ge 1}$ with $X_i\in \mathscr{E}_i$ such that 
\begin{itemize}
\item[(1)] if $\mu_\ell < \rho_0$, then $\mu_\ell = \ell$ and $P_\ell(\theta) = X_\ell(\theta)$; 
\item[(2)] if $\mu_\ell = \rho_0$, then
\begin{align*}
P_\ell (\theta) = 
\begin{cases}
    a_0/\rho_0^2 &\text{if }\rho_0\text{ is not an integer,}\\
    a_0/\rho_0^2 + X_{\rho_0}(\theta) &\text{if }\rho_0\text{ is an integer;}
\end{cases}
\end{align*}
\item[(3)] if $\mu_\ell > \rho_0$ and $\mu_\ell \neq \rho_0 + \mu_m$ for any $m \ge 1$, then $\mu_\ell$ is an integer and 
$$P_{\ell}(\theta) = X_{\mu_\ell}(\theta);$$
\item[(4)] if $\mu_\ell > \rho_0$ and $\mu_\ell = \rho_0 + \mu_m$ for some $1 \le m < \ell$, then
\begin{itemize} 
\item[(4a)] if $\mu_\ell$ is not an integer, then $P_\ell$ is the unique solution of
\begin{align}
\label{eq-recursive-equation-Asymp-expans}
 \partial_{\theta\theta} P_\ell + \mu_\ell^2 P_\ell 
 = \sum_{j=1}^m a_j \sum_{\mu_{k_1} + \cdots + \mu_{k_j} = \mu_m} P_{k_1}\cdots P_{k_j};
\end{align}
\item[(4b)] if $\mu_\ell$ is an integer, 
then 
$$P_\ell(\theta)=X_{\mu_\ell}(\theta)+Q_\ell(\theta),$$ 
where $Q_\ell$ is a solution 
of \eqref{eq-recursive-equation-Asymp-expans}, uniquely determined by $P_j$ for $1\le j<\ell$
under the additional requirement that $Q_\ell$ is perpendicular to $\mathscr{E}_{\mu_\ell}$ in $L^2(\mathbb S^1)$. 
\end{itemize}
\end{itemize}
Moreover, $P_\ell$ has an order at most $[\mu_\ell]$, the largest integer smaller than or equal to $\mu_\ell$.
\end{proposition}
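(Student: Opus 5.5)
The plan is to compare coefficients of $e^{-\mu_\ell t}$ on the two sides of \eqref{eq-formal-computation}, proceeding by induction on $\ell$, and to solve each resulting equation on $\mathbb S^1$ with Lemma \ref{lemma-polynomial_solution}.

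\emph{Structure of the index set.} First I will record which exponents can appear on the right-hand side of \eqref{eq-formal-computation}. They are $\rho_0$ and $\rho_0+\mu_m$ for $m\ge1$. Every element of $\mathcal I$ below $\rho_0$ is a combination of the $\rho_i=i$, $i\ge1$, alone, and is therefore a positive integer. Since every positive integer below $\rho_0$ belongs to $\mathcal I$, we get $\mu_\ell=\ell$ whenever $\mu_\ell<\rho_0$. Now take $\mu_\ell>\rho_0$ that is not of the form $\rho_0+\mu_m$. Write $\mu_\ell=\sum m_i\rho_i$. If $m_0\ge1$, then $\mu_\ell-\rho_0$ is either $0$ (excluded, since $\mu_\ell>\rho_0$) or an element $\mu_m$ of $\mathcal I$ (excluded by assumption). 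Hence $m_0=0$ and $\mu_\ell$ is an integer. In case (4) we have $\mu_m<\mu_\ell$, so $m<\ell$.

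\emph{Coefficient comparison.} The key observation is that on the right of \eqref{eq-formal-computation}, the coefficient of $e^{-(\rho_0+\mu_m)t}$ is $\sum_{j=1}^m a_j\sum_{\mu_{k_1}+\cdots+\mu_{k_j}=\mu_m}P_{k_1}\cdots P_{k_j}$. All indices there satisfy $k_i\le m<\ell$, and the sum stops at $j=m$ because each $\mu_k\ge\mu_1$. Equating coefficients of $e^{-\mu_\ell t}$ then splits into cases.
\begin{itemize}
\item If $\mu_\ell$ does not appear on the right (cases (1) and (3)), then $\partial_{\theta\theta}P_\ell+\mu_\ell^2P_\ell=0$. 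A nonzero solution requires $\mu_\ell$ to be an integer, and then $P_\ell\in\mathscr E_{\mu_\ell}$. I will call this solution $X_{\mu_\ell}$.
\item If $\mu_\ell=\rho_0$, the equation is $\partial_{\theta\theta}P+\rho_0^2P=a_0$. For non-integer $\rho_0$, part (i) of Lemma \ref{lemma-polynomial_solution} gives the unique solution $a_0/\rho_0^2$. For integer $\rho_0$, the constant $a_0$ lies in $\mathscr E_0\perp\mathscr E_{\rho_0}$, so part (ii) gives $a_0/\rho_0^2+X_{\rho_0}$.
\item Case (4) is \eqref{eq-recursive-equation-Asymp-expans}. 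Its right side is determined by $P_1,\dots,P_{\ell-1}$, and I apply part (i) or part (ii) of Lemma \ref{lemma-polynomial_solution} according to whether $\mu_\ell$ is an integer.
\end{itemize}
Each integer $i$ equals $\mu_\ell$ for exactly one $\ell$, so these choices define the $X_i$ unambiguously. Any integer $i$ never assigned a harmonic this way gets $X_i=0$.

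\emph{Order bound.} The main obstacle is the solvability condition in (4b): the right side must be orthogonal to $\mathscr E_{\mu_\ell}$. I will prove this together with the final order claim, by induction showing that $P_k$ has order at most $[\mu_k]$. By Lemma \ref{lemma-decomposition-of-spherical-harmonics}, a product $P_{k_1}\cdots P_{k_j}$ with $\mu_{k_1}+\cdots+\mu_{k_j}=\mu_m$ has order at most $\sum_i[\mu_{k_i}]\le[\mu_m]$. Since $\rho_0>0$, this gives $[\mu_m]\le[\mu_\ell-\rho_0]<\mu_\ell$.

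In case (4b) the right side therefore has order strictly less than the integer $\mu_\ell$. That makes it orthogonal to $\mathscr E_{\mu_\ell}$, so the equation is solvable. By the last assertion of Lemma \ref{lemma-polynomial_solution}, $Q_\ell$ has the same order as the right side, which is less than $\mu_\ell$. Adding $X_{\mu_\ell}$ gives order at most $\mu_\ell=[\mu_\ell]$.

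In case (4a) the same lemma gives order $\le[\mu_m]\le[\mu_\ell]$. Cases (1)–(3) give order at most $[\mu_\ell]$ directly, since there $P_\ell$ is $X_{\mu_\ell}$ or a constant plus possibly $X_{\rho_0}$. This closes the induction.
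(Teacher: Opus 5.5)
Your proposal is correct and follows essentially the same route as the paper. You argue by induction on $\ell$ and match the coefficients of $e^{-\mu_\ell t}$ in the formal identity. You then use Lemma \ref{lemma-polynomial_solution} for solvability, with the order bound $[\mu_m]<\mu_\ell$ from Lemma \ref{lemma-decomposition-of-spherical-harmonics} making case (4) solvable. Along the way you spell out the ``simple argument'' the paper leaves implicit, namely that $\mu_\ell>\rho_0$ not of the form $\rho_0+\mu_m$ forces the coefficient of $\rho_0$ to vanish.
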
 

By the last assertion and Lemma \ref{lemma-decomposition-of-spherical-harmonics},
the right-hand side of \eqref{eq-recursive-equation-Asymp-expans} has an
order at most $[\mu_m] < \mu_\ell$. 
Therefore, Lemma \ref{lemma-polynomial_solution} is applicable to Case (4).

\begin{proof} We will use induction to characterize $P_\ell$ and estimate its order. 
Write
$$\mu_\ell = \alpha_0\rho_0 + \alpha_1\rho_1 + \cdots = \alpha_0\rho_0 + 1\alpha_1 + 2\alpha_2 + \cdots,$$
for some nonnegative integer sequence $\{\alpha_i\}_{i\ge 0}$ with finitely many nonzero terms. 

First, assume $\mu_\ell < \rho_0$. Then, $\alpha_0 = 0$. 
Hence, $\mu_\ell=\ell$, an integer, and the decay order $e^{-\mu_\ell t}$ 
does not appear in the right-hand side of \eqref{eq-formal-computation}. Therefore, for such $\ell$,
\begin{align}\label{eq-projection-ell} \partial_{\theta\theta} P_\ell + \mu_\ell^2 P_\ell = 0.\end{align}
In other words, $P_\ell \in \mathrm{Ker}\,(\partial_{\theta\theta} + \mu_\ell^2) =  \mathscr{E}_{\ell}$. This proves (1).

Second, assume $\mu_\ell > \rho_0$ and $\mu_\ell \neq \rho_0 + \mu_m$ for any $m \ge 1$. 
A simple argument yields $\alpha_0 = 0$. 
Hence, $\mu_\ell$ is an integer and the decay order $e^{-\mu_\ell t}$ 
does not appear in the right-hand side of \eqref{eq-formal-computation}. 
Therefore, \eqref{eq-projection-ell} holds for such $\ell$,
and hence $P_\ell \in \mathrm{Ker}\,(\partial_{\theta\theta} + \mu_\ell^2) =  \mathscr{E}_{\mu_\ell}$. This proves (3).

Next, assume that $\mu_\ell = \rho_0$. 
This corresponds to the least decay order in the right-hand side of \eqref{eq-formal-computation}. 
Hence, we have
$$\partial_{\theta\theta} P_\ell + \mu_\ell^2 P_\ell = a_0.$$
Since constant functions are orthogonal to $\mathrm{Ker}\,(\partial_{\theta\theta} + \rho_0^2)$ in $L^2(\mathbb S^1)$,
(2) follows from Lemma \ref{lemma-polynomial_solution}. 
Moreover, the order of $P_\ell$ is 0 if $\rho_0$ is not an integer 
and is at most $\rho_0$ if $\rho_0$ is an integer. 

Note that in either case of (1)-(3),  $P_\ell$ has an order at most $[\mu_\ell]$.
In addition, we obtain the initial step of the induction since $\mu_1 \le \rho_0$.

Last, consider $\mu_\ell > \rho_0$ and $\mu_\ell = \rho_0 + \mu_m$ for some $1 \le m < \ell$. 
In this case, both sides of \eqref{eq-formal-computation} contain 
the decay order $e^{-\mu_\ell t}$.
Hence, \eqref{eq-recursive-equation-Asymp-expans} holds for $P_\ell$.
By Lemma \ref{lemma-decomposition-of-spherical-harmonics} and the induction hypothesis, 
if $\mu_{k_1} + \cdots + \mu_{k_j} = \mu_m$ for some $1 \le j \le m$, then $a_j P_{k_1}\cdots P_{k_j}$
has an order at most $[\mu_{k_1}] + \cdots + [\mu_{k_j}] \le \mu_m$.
Therefore, the right-hand side of \eqref{eq-recursive-equation-Asymp-expans} has an order at most $[\mu_m]$.
Note that $[\mu_m] \le \mu_m < \mu_\ell$.
Therefore, by Lemma \ref{lemma-polynomial_solution}, we obtain that 
if $\mu_\ell$ is not an integer, then $P_\ell$ is the unique solution of \eqref{eq-recursive-equation-Asymp-expans}
and if $\mu_\ell$ is an integer, then 
$$P_\ell=X_{\mu_\ell}+Q_\ell,$$ 
where $X_{\mu_\ell} \in \mathscr E_{\mu_\ell}$ and $Q_\ell$ is a solution 
of \eqref{eq-recursive-equation-Asymp-expans}, uniquely determined by $P_j$ for $1\le j<\ell$
under the additional requirement that $Q_\ell$ is perpendicular to $\mathscr{E}_{\mu_\ell}$ in $L^2(\mathbb S^1)$.
In the first case that $\mu_\ell$ is not an integer, 
$P_\ell$ has an order at most $[\mu_m]$. 
In the second case that $\mu_\ell$ is an integer, $Q_\ell$ has an order at most $[\mu_m]$, and hence 
$P_\ell = X_{\mu_\ell}+Q_\ell$ has an order at most $\mu_\ell$.
In either case, $P_\ell$ has an order at most $[\mu_\ell]$.
This finishes the proof by induction.
\end{proof} 

We now make some remarks. First, Case (1) occurs if $\rho_0 > 1$, 
Case (3) never happens if $\rho_0$ is an integer, 
Case (4a) is relevant only when $\rho_0$ is not an integer, 
and Case (4b) never happens if $\rho_0$ is an irrational number.
Second, in the case that $\mu_\ell = \rho_0 + \mu_m$ for some $1 \le m < \ell$,
the right-hand side of \eqref{eq-recursive-equation-Asymp-expans} always has an order
strictly less than $\mu_\ell$.
In particular, we can solve \eqref{eq-recursive-equation-Asymp-expans} and 
characterize the solutions by Lemma \ref{lemma-polynomial_solution}. 
This is due to the extra exponential factor $e^{-\tau_0 t}$ in the right-hand side 
of \eqref{eq-linearized-equation} and is significantly different from the study of \eqref{eq-Yamabe-equation}. 

For brevity, given a sequence $\{X_i\}_{i \geq 1}$ 
with $X_i\in\mathscr{E}_i$, we write 
$$\mathcal{X} = \{X_i\}_{i \geq 1}.$$
In the process of determining the entire sequence $\{P_\ell\}_{\ell\ge 1}$, there are infinitely many free elements given by 
$ \{X_i\}_{i \geq 1}$ with $X_i\in\mathscr{E}_i$. 
They of course depend on $v$, but have no explicit expressions in terms of earlier $P_j$'s. 
If we know $\{X_i\}_{i \geq 1}$ by other methods, then we know the entire $\{P_\ell\}_{\ell \geq 1}$. 
In this sense, we write $\{P_\ell\}_{\ell \ge 1} = \{P_\ell(\mathcal{X})\}_{\ell\ge 1}$ to emphasize the dependence on $\mathcal{X}$.
The following result can be viewed as the converse of Proposition \ref{prop-polynomial-coefficients}.

\begin{proposition}\label{prop-generated-polynomials} 
Let $\{\mu_\ell\}_{\ell\geq 1}$ be the index set associated with $L$ and $\tau_0>0$ as in Definition \ref{def-index-set} 
and $\mathcal{X} =\{X_i\}_{i \geq 1}$ be a given sequence of spherical harmonics with $X_i\in \mathscr{E}_i$. 
Then, there exists a unique sequence of smooth functions $\{P_\ell(\mathcal{X})\}_{\ell\geq 1}$ on $\mathbb S^1$ such that 
Proposition \ref{prop-polynomial-coefficients}(1)-(4) hold and that 
$P_\ell$ has an order at most $[\mu_\ell]$.
\end{proposition}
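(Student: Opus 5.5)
We construct $\{P_\ell\}_{\ell\ge 1}$ by strong induction on $\ell$, defining $P_\ell$ directly from the prescriptions (1)--(4) of Proposition \ref{prop-polynomial-coefficients}, with $X_i$ taken from the given $\mathcal X$. Uniqueness then comes for free, because each case pins $P_\ell$ down completely from $\mathcal X$ and the earlier $P_j$. We must therefore check two things: the cases (1)--(4) cover every $\ell$, and the recursive equation in Case (4) is solvable with the stated order bound.

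First we check that the cases are exhaustive and the data well defined. Write $\mu_\ell=\alpha_0\rho_0+\sum_{i\ge1} i\alpha_i$.
\begin{itemize}
\item If $\mu_\ell<\rho_0$, then $\alpha_0=0$, so $\mu_\ell$ is an integer. Since every positive integer lies in $\mathcal I$, the integers below $\rho_0$ are exactly $\mu_1,\dots$ in order, so $\mu_\ell=\ell$ and $X_\ell$ is available.
\item If $\mu_\ell>\rho_0$ and $\mu_\ell\neq\rho_0+\mu_m$ for every $m$, then $\alpha_0=0$. Otherwise $\mu_\ell-\rho_0\in\mathcal I$ would be some $\mu_m$, or would be $0$, which contradicts $\mu_\ell>\rho_0$. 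Hence $\mu_\ell$ is an integer and $X_{\mu_\ell}$ is defined.
\item In Case (4), $\mu_m<\mu_\ell$ forces $m<\ell$, so the right-hand side of \eqref{eq-recursive-equation-Asymp-expans} involves only $P_{k}$ with $\mu_k\le\mu_m$, hence with $k\le m<\ell$. These are already constructed.
\end{itemize}
Case (2) is the base case: $\mu_1\le\rho_0$ always, and if $\mu_1<\rho_0$ then Case (1) applies.

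The inductive step is the key point, and we repeat the order argument from the proof of Proposition \ref{prop-polynomial-coefficients}. Assume $P_k$ has order at most $[\mu_k]$ for all $k<\ell$. By Lemma \ref{lemma-decomposition-of-spherical-harmonics}, each product $P_{k_1}\cdots P_{k_j}$ with $\mu_{k_1}+\cdots+\mu_{k_j}=\mu_m$ has order at most $\sum[\mu_{k_i}]\le[\mu_m]$, and so does the right-hand side $R$. Since $[\mu_m]\le\mu_m<\mu_\ell$, two situations arise. If $\mu_\ell\notin\Z$, Lemma \ref{lemma-polynomial_solution}(i) gives a unique $P_\ell$ of the same order as $R$. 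If $\mu_\ell\in\Z$, then $R\perp\mathscr E_{\mu_\ell}$ because its order is below $\mu_\ell$. Lemma \ref{lemma-polynomial_solution}(ii) then yields a unique $Q_\ell\perp\mathscr E_{\mu_\ell}$, and we set $P_\ell=X_{\mu_\ell}+Q_\ell$, which has order at most $\mu_\ell=[\mu_\ell]$.

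In Case (2) the constant $a_0$ is orthogonal to $\mathscr E_{\rho_0}$, so Lemma \ref{lemma-polynomial_solution} applies again. The orders are $0$, or at most $\rho_0$ when $\rho_0\in\Z$; in Cases (1) and (3) the order is exactly $\mu_\ell$.

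Only the bookkeeping of the first step needs care, namely that $\mu_\ell\neq\rho_0+\mu_m$ forces $\alpha_0=0$. The analytic content is entirely contained in Lemma \ref{lemma-polynomial_solution}, so no real obstacle is expected.
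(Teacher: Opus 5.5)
Your proof is correct and is essentially the argument the paper has in mind: the paper omits the proof, saying only that it is adapted from the inductive proof of Proposition \ref{prop-polynomial-coefficients}, and you carry out that induction, including the checks that the cases are exhaustive and that the required $X_i$ have integer index. One minor slip: in Cases (1) and (3) the order is at most $\mu_\ell$ rather than exactly $\mu_\ell$, since $X_{\mu_\ell}$ may vanish; this does not affect the argument.
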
 

The proof can be adapted from that of Proposition \ref{prop-polynomial-coefficients} 
and is omitted.

With the sequence $\{P_\ell(\mathcal{X})\}_{\ell\ge 1}$, we can form a formal infinite series 
\begin{equation}\label{eq-formal-series-smooth}\sum_{\ell=1}^\infty P_\ell(\mathcal{X})(\theta) e^{-\mu_\ell t}.\end{equation}
We will refer to the series as the {\it formal expansion determined by $\mathcal{X}$}.  
We point out again that the purpose of formulating Proposition \ref{prop-polynomial-coefficients} 
is to summarize the properties of $\{P_\ell\}_{\ell\ge1}$. 
No results to be proved are based on the formal expansion \eqref{eq-formal-expansion} 
or \eqref{eq-formal-series-smooth}.

\section{Asymptotic Expansions}\label{sec-Asymp-Expansions}

In this section, we characterize asymptotic behaviors of exponential decay solutions 
of \eqref{eq-linearized-equation}
and derive asymptotic expansions. 
We will verify that the formal expansions derived in the previous section 
provide a good approximation of actual solutions. 
For a given exponential decay solution $v$ of \eqref{eq-linearized-equation}, 
such an asymptotic expansion is a formal infinite series as in \eqref{eq-formal-series-smooth}, 
for a given positive, strictly increasing, and divergent sequence of $\{\mu_\ell\}_{\ell\ge 1}$. 
We will prove that the difference of $v$ and the $\ell$-th partial sum 
has an exponential decay order $\mu_{\ell+1}$, for any $\ell\ge 1$. 
We will establish Theorem B in the introduction in Theorem \ref{thrm-asymptotic-expansion}.
Our proof here is different from that in \cite{GuoWanYang2021CalVar}.

Let $\tau_0>0$ be a constant and $f$ in \eqref{eq-linearized-equation} 
be a smooth function on $(-\epsilon_0,\epsilon_0)$, for some $\epsilon_0>0$. 
Then, any solution $v$ of \eqref{eq-linearized-equation} is smooth.  
Solutions are assumed to exist in $[t_0,\infty) \times \mathbb{S}^1$, for some $t_0 \ge0$. 
In particular, $|v| <\epsilon_0$ on $[t_0,\infty)\times\mathbb{S}^1$. 
Throughout this paper, we always consider a solution 
$v$ of \eqref{eq-linearized-equation} on $[t_0,\infty)\times\mathbb{S}^1$, for some $t_0 \ge 0$, 
such that {\it $v(t,\theta)$ converges to $0$ exponentially as $t\to\infty$, uniformly in $\theta \in \mathbb{S}^1$}, i.e., 
for any $t\geq t_0$ and $\theta \in \mathbb{S}^1$,
\begin{equation}\label{eq-exp-convergence}
\left|v(t,\theta) \right| \leq C_* e^{-\gamma t},\end{equation} 
for some positive constants $C_*$ and $\gamma$.

For simplicity, we will use the big $O$ notation: 
$$\text{$h = O(g) \Longleftrightarrow |h(t,\theta)| \leq Cg(t)$ for any $t \geq t_0$, 
$\theta \in \mathbb{S}^1$},$$ 
for some constant $C > 0$.  
We adopt this notation if we do not need an explicit dependence of $C$.

The next theorem is the main result in this section, concerning the asymptotic expansion of $v$.

\begin{theorem}\label{thrm-asymptotic-expansion}
Let $f$ be a smooth function on $(-\epsilon_0,\epsilon_0)$, 
$\tau_0>0$ be a constant, 
and $\{\mu_\ell\}_{\ell\geq 1}$ be the index set associated with $L$ and $\tau_0>0$ as in Definition \ref{def-index-set}.
Assume that $v$ is a solution of \eqref{eq-linearized-equation} on $[t_0,\infty)\times\mathbb{S}^1$ for some $t_0\ge 0$
such that $v(t,\theta)$ converges to $0$ exponentially as $t\to\infty$, uniformly in $\theta \in \mathbb{S}^1$.
Then, there exists a sequence $\mathcal{X} = \{X_i\}_{i\geq 1}$ 
of spherical harmonics with $X_i \in \mathscr{E}_i$
such that, for any $\ell \ge 1$ and any $(t,\theta) \in [t_0,\infty)\times\mathbb{S}^1$, 
\begin{align}\label{eq-higher-order-expansion-inequality}
\Big|{v(t,\theta) - \sum_{j=1}^\ell P_j(\mathcal{X})(\theta) e^{-\mu_j t}}\Big| \leq C_{\ell} e^{-\mu_{\ell+1} t},
\end{align}
where $\{P_\ell(\mathcal{X})\}_{\ell \geq 1}$ is the sequence of smooth functions on $\mathbb S^1$ 
determined by $\mathcal{X}$ as in Proposition \ref{prop-polynomial-coefficients}, 
and $C_{\ell}>0$ is a constant depending only on $v$ and $\ell$. 
\end{theorem}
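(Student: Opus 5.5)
We argue by induction on $\ell$, building the sequence $\mathcal{X}$ one harmonic at a time. The main tool is Lemma \ref{lemma-same-decay-estimate-for-L}, applied to the remainder of $v$ after subtracting a partial sum. The inductive hypothesis at step $\ell$ has two parts. First, $X_1,\dots,X_{[\mu_\ell]}$ have already been chosen, so that $P_1,\dots,P_\ell$ are determined through Proposition \ref{prop-polynomial-coefficients}; the needed $X_i$ are those with $i\le\mu_\ell$, and $P_j$ for $j\le\ell$ depends only on these. Second, the remainder
$$w_\ell=v-\sum_{j=1}^\ell P_j e^{-\mu_j t}$$
satisfies $|w_\ell|\le C_\ell e^{-\mu_{\ell+1}t}$. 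For the base case we use the exponential decay \eqref{eq-exp-convergence} with $\gamma<\min\{1,\tau_0\}$, and upgrade it as described below.

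For the inductive step, compute $Lw_\ell$. Since each $P_j$ solves its defining equation from Proposition \ref{prop-polynomial-coefficients}, we have
$$Lw_\ell=e^{-\tau_0 t}f(v)-\sum_{j\le\ell}\big(\partial_{\theta\theta}P_j+\mu_j^2P_j\big)e^{-\mu_jt}.$$
Taylor expand $f(v)$ to a finite order $N$ with $N\gamma$ large, and substitute $v=\sum_{j\le \ell}P_je^{-\mu_jt}+w_\ell$. The terms built only from the $P_j$ reproduce the right-hand side of \eqref{eq-formal-computation}, truncated. All terms with exponent $\rho_0+\mu_m\le\mu_\ell$ cancel against the subtracted sum, by (2) and (4) of Proposition \ref{prop-polynomial-coefficients}. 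What survives consists of three kinds of terms: terms $e^{-\rho_0t}\cdot e^{-\sigma t}$ with $\sigma\in\mathcal I$ and $\rho_0+\sigma\ge\mu_{\ell+1}$; cross terms containing $w_\ell$, which are $O(e^{-(\tau_0+\mu_{\ell+1})t})$; and the Taylor remainder $O(e^{-(\tau_0+N\gamma)t})$.

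The key exponent fact is that any sum $\rho_0+\sigma$ with $\sigma\in\mathcal I$ lies in $\mathcal I$. Hence it is either $\ge\mu_{\ell+1}$, or it was already cancelled. This gives $|Lw_\ell|\le Ce^{-\mu_{\ell+1}t}$, and in fact the coefficient of $e^{-\mu_{\ell+1}t}$ in $Lw_\ell$ is exactly the right-hand side of \eqref{eq-recursive-equation-Asymp-expans} when $\mu_{\ell+1}=\rho_0+\mu_m$. The remaining part is $O(e^{-\mu_{\ell+2}t})$, since $\mu_{\ell+2}\le\tau_0+\mu_{\ell+1}$.

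Now set $w'=w_\ell-P_{\ell+1}e^{-\mu_{\ell+1}t}$, where $P_{\ell+1}$ is the particular solution given by Lemma \ref{lemma-polynomial_solution}: the $Q$-part only, or zero in cases (1) and (3). Then $|Lw'|=O(e^{-\mu_{\ell+2}t})$. Apply Lemma \ref{lemma-same-decay-estimate-for-L}(ii) with $\gamma=\mu_{\ell+2}$, $k=0$. This yields harmonics $Y_i\in\mathscr E_i$ such that
$$w'-\sum_{i\le[\mu_{\ell+2}]}Y_ie^{-it}=O\big(t^{\delta}e^{-\mu_{\ell+2}t}\big),$$
where $\delta\in\{0,1\}$ accounts for the integer case. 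Since $w'=O(e^{-\mu_{\ell+1}t})$, every $Y_i$ with $i<\mu_{\ell+1}$ vanishes. If $\mu_{\ell+1}$ is an integer, we define $X_{\mu_{\ell+1}}=Y_{\mu_{\ell+1}}$; this completes $P_{\ell+1}$ and gives $w_{\ell+1}=O(t^\delta e^{-\mu_{\ell+2}t})$.

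The main obstacle is removing the spurious factor $t$ in the resonant case $\mu_{\ell+2}\in\Z$. We handle it by bootstrapping. Having $w_{\ell+1}=O(t e^{-\mu_{\ell+2}t})$, we redo the computation one level deeper. Then $Lw_{\ell+1}$ equals $R e^{-\mu_{\ell+2}t}$, whose coefficient $R$ lies in $\mathrm{Ker}(\partial_{\theta\theta}+\mu_{\ell+2}^2)^\perp$ by the order count after Proposition \ref{prop-polynomial-coefficients}, plus $O(te^{-\mu_{\ell+3}t})$. We subtract $Q_{\ell+2}e^{-\mu_{\ell+2}t}$ and apply the lemma at the strictly larger rate $\mu_{\ell+3}$, where the $t$ factor is harmless. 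The leftover is then $Y e^{-\mu_{\ell+2}t}+O(t e^{-\mu_{\ell+3}t})$, which is $O(e^{-\mu_{\ell+2}t})$ and gives the claimed bound for $w_{\ell+1}$.

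The base case follows by iterating the same argument from the crude bound $e^{-\gamma t}$. We climb through rates $\gamma,2\gamma,\dots$ until reaching $\mu_1$: at each stage, $Lv=e^{-\tau_0t}f(v)$ is $O(e^{-\tau_0 t})$, and the lemma improves the decay of $v$ up to $\min\{1,\tau_0\}=\mu_1$. Finally, the $X_i$ chosen at different steps are consistent, so a single sequence $\mathcal X$ works for all $\ell$, with constants $C_\ell$ depending only on $v$ and $\ell$.
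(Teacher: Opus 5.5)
Your proof is correct and is essentially the paper's argument: the same ingredients appear in the same roles. These are the cancellation of all terms with exponent $\le\mu_\ell$ via the recursions of Proposition~\ref{prop-polynomial-coefficients}, subtraction of the perpendicular particular solution $Q_{\ell+1}e^{-\mu_{\ell+1}t}$, Lemma~\ref{lemma-same-decay-estimate-for-L} applied to the remainder, elimination of the harmonics $Y_i$ with $i<\mu_{\ell+1}$ by decay comparison, and reading off $X_{\mu_{\ell+1}}$ from the surviving one. The only difference is how the resonant factor $t$ is avoided: the paper carries the weaker hypothesis $O(e^{-\mu t})$ for every $\mu\in(\mu_\ell,\mu_{\ell+1})$, applies the lemma only at non-integer rates $\mu_{\ell+1}+\epsilon$, and reads off the sharp bound at level $\ell$ from level $\ell+1$, which is what your one-level-deeper bootstrap amounts to. Note that this same device, or subtracting $a_0e^{-t}$ as the paper does, is also needed in your base case when $\tau_0=1$: there Lemma~\ref{lemma-same-decay-estimate-for-L}(i) alone only gives $te^{-t}$, and no climbing through rates $\gamma,2\gamma,\dots$ is needed otherwise.
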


\begin{proof}
All estimates in the proof hold for any $t \ge t_0$ and $\theta \in \mathbb S^1$.
For an arbitrarily fixed large integer $K$, we expand $f$ up to order $K$ and write 
\begin{align}\label{eq-RHS-expanded-K}
f(v)= \sum_{j=0}^K a_j v^j + f_K, 
\end{align}
where 
$$|f_K| \le C|v|^{K+1},$$ 
for some positive constant $C$ depending on the sup-norm of $f^{(K+1)}$ on $(-\epsilon_0,\epsilon_0)$. 
We first prove that there exists a sequence $\mathcal{X} = \{X_i\}_{i\geq 1}$ 
of spherical harmonics with $X_i \in \mathscr{E}_i$
such that, for any $\ell \ge 1$ and any $\mu$ with $\mu_\ell < \mu < \mu_{\ell+1}$,
\begin{align}\label{eq-higher-order-expansion-inequality-mu}
{v(t,\theta) - \sum_{j=1}^\ell P_j(\mathcal{X})(\theta) e^{-\mu_j t}} = O(e^{-\mu t}),
\end{align}
where $\{P_\ell(\mathcal{X})\}_{\ell \geq 1}$ is the sequence of smooth functions on $\mathbb S^1$
determined by $\mathcal{X}$ as in Proposition \ref{prop-polynomial-coefficients}. 

We will prove \eqref{eq-higher-order-expansion-inequality-mu} by induction. 
First, consider $\ell=1$. 
Note that $\mu_1 \le \rho_0$. By \eqref{eq-exp-convergence}, we have
$$Lv = e^{-\rho_0 t} f(v) = O(e^{-\rho_0 t}).$$ 
If $\mu_1 < \rho_0$, 
then $\mu_1 = 1$ and $e^{-\rho_0 t} f(v) = O(e^{-\mu t})$ for any $\mu \in (\mu_1,\mu_2)$.
By Lemma \ref{lemma-same-decay-estimate-for-L}(ii), there exists $X_1 \in \mathscr{E}_1$ such that, 
for any $(t,\theta) \in [t_0,\infty)\times\mathbb{S}^1$,
$$\big|v(t,\theta) - X_1(\theta) e^{-t}\big| \leq C_\mu e^{-\mu t}.$$
We point out that if $\mu \in (\mu_1,\mu_2) = (1,\mu_2)$, then $\mu$ is not an integer and $\mu < 2$.
In addition, $X_1$ is independent of $\mu$ between $\mu_1$ and $\mu_2$.
Next, assume that $\mu_1 = \rho_0$. In this case, $0 < \rho_0 \le 1$. Set
$$v_1 = v - (a_0/\rho_0^2) e^{-\rho_0 t}.$$
Then, by \eqref{eq-exp-convergence}, $v_1$ satisfies
$$L v_1 = e^{-\rho_0 t}\sum_{j=1}^K a_j v^j + e^{-\rho_0 t}f_K = O(e^{-(\rho_0 + \gamma) t}).$$
Choose $0 < \epsilon < \gamma$ small such that $\rho_0 + \epsilon < \mu_2$. 
Then, $\rho_0 + \epsilon<2$ is not an integer. If $\rho_0 + \epsilon <1$, 
by Lemma \ref{lemma-same-decay-estimate-for-L}(i), we get
$$v_1 = O(e^{-(\rho_0 + \epsilon) t}).$$
If $\rho_0 + \epsilon > 1$, Lemma \ref{lemma-same-decay-estimate-for-L}(ii) implies that
there exists $X_1 \in \mathscr{E}_1$ such that
$$v_1 - X_1(\theta) e^{-t} = O(e^{-(\rho_0 + \epsilon) t}).$$
Hence,
\begin{align*}
P_1(\theta) = 
\begin{cases}
a_0/\rho_0^2 &\text{if }\mu_1 = \rho_0 < 1,\\
a_0/\rho_0^2 + X_1(\theta)&\text{if }\mu_1 = \rho_0 = 1,
\end{cases}
\end{align*}
and
$$v = P_1(\theta) e^{-\mu_1 t} + O(e^{-(\mu_1 + \epsilon) t}).$$
In particular, $v = O(e^{-\mu_1 t})$ and 
$$L\big(v- P_1(\theta) e^{-\mu_1 t}\big) 
= e^{-\rho_0 t}\sum_{j=1}^K a_j v^j + e^{-\rho_0 t}f_K = O(e^{-(\rho_0 + \mu_1) t}) = O(e^{-\mu_2 t}).$$
Therefore, we can improve the decay to $v - P_1(\theta) e^{-\mu_1 t}=O(e^{-\mu t})$ 
for arbitrary $\mu$ between $\mu_1$ and $\mu_2$. 
This finishes the proof of \eqref{eq-higher-order-expansion-inequality-mu} for $\ell = 1$.

Now, assume that \eqref{eq-higher-order-expansion-inequality-mu} holds for some $\ell\ge 1$ and 
some smooth functions $P_1, \cdots, P_\ell$ on $\mathbb S^1$ as in Proposition \ref{prop-polynomial-coefficients}.
For simplicity, set
$$v_\ell = v- \sum_{i=1}^\ell P_i(\theta) e^{-\mu_i t}.$$ 
By the induction hypothesis, we get, for any $\mu_\ell < \mu < \mu_{\ell+1}$,
$$v_\ell = O(e^{-\mu t}).$$ 
Write 
$$v =v_\ell+ \sum_{i=1}^\ell P_i(\theta) e^{-\mu_i t},$$ 
and substitute this in \eqref{eq-linearized-equation}. 
With \eqref{eq-RHS-expanded-K}, we have 
\begin{equation}\label{eq-bootstrapping-eq}
L v_\ell + \sum_{i=1}^\ell L (P_i(\theta) e^{-\mu_i t})= F,
\end{equation}
where 
\begin{align}\label{eq-RHS-expanded-K-2}
F =  e^{-\rho_0 t}\sum_{j=0}^K a_j v^j + e^{-\rho_0 t}f_K.
\end{align}
Here, we choose sufficiently large $K>\ell$. We will construct a smooth function $Q_{\ell+1}$ on $\mathbb S^1$ 
such that $Q_{\ell+1}\perp \mathrm{Ker}\,(\partial_{\theta\theta}+ \mu_{\ell+1}^2)$ in $L^2(\mathbb{S}^1)$ and
\begin{align}
\label{eq-higher-order-estimate-inductive-step}
L(v_\ell - Q_{\ell+1}(\theta) e^{-\mu_{\ell+1} t}) = O(e^{-(\mu_{\ell+1} +\epsilon )t}),
\end{align}
for some $\epsilon > 0$ small.

We consider several cases. First, assume that $\mu_{\ell+1}\le \rho_0$. 
In this case, $\mu_i = i$ for $i = 1,\cdots,\ell$ and $P_i = X_i \in \mathscr{E}_i$ 
by Proposition \ref{prop-polynomial-coefficients}(1). Therefore, \eqref{eq-bootstrapping-eq} reduces to
$$L v_\ell = F,$$
with $F$ given by \eqref{eq-RHS-expanded-K-2}. If $\mu_{\ell+1} < \rho_0$, then $\rho_0 \ge \mu_{\ell+2}$ and
$$F = O(e^{-\rho_0 t}) =  O(e^{-\mu_{\ell+2} t}).$$
If $\mu_{\ell+1} = \rho_0$, then
$$L\big(v_\ell - a_0/\rho_0^2 e^{-\mu_{\ell+1} t}\big) 
= e^{-\rho_0 t}\sum_{j=1}^K a_j v^j + e^{-\rho_0 t}f_K = O(e^{-(\rho_0 + \mu_1) t}) = O(e^{-\mu_{\ell+2} t}),$$
where we used
$$L(a_0/\rho_0^2 e^{-\mu_{\ell+1} t}) = a_0 e^{-\rho_0 t},$$
and
$$\rho_0 + \mu_1 = \mu_{\ell+1} + \mu_1 \ge \mu_{\ell+2}.$$
In summary, if $\mu_{\ell+1} \leq \rho_0$, then \eqref{eq-higher-order-estimate-inductive-step} holds 
with $Q_{\ell+1}$ given by
\begin{align*}
Q_{\ell+1}=
\begin{cases}
0 &\text{if }\mu_{\ell+1} < \rho_0,\\
a_0/\rho_0^2 & \text{if }\mu_{\ell+1} = \rho_0.
\end{cases}
\end{align*}

Next, we consider $\mu_{\ell+1} > \rho_0$.
Note that, for any $j \ge 1$,
$$a_j\Big(v_\ell + \sum_{i=1}^\ell P_i(\theta) e^{-\mu_i t}\Big)^j 
= \sum_{s = 0}^j \left(\begin{matrix}j\\s\end{matrix}\right) a_j v_\ell^s\cdot 
\Big(\sum_{i=1}^\ell P_i(\theta)e^{-\mu_i t}\Big)^{j-s} .$$
If $s \geq 1$, then 
$$a_j v_\ell^s\cdot \Big(\sum_{i=1}^\ell P_i(\theta)e^{-\mu_i t}\Big)^{j-s}=O(e^{-(\mu_\ell + \epsilon) t}),$$ 
for any $0 < \epsilon < \mu_{\ell+1} - \mu_{\ell}$. 
For  $s = 0$, we have
\begin{align*}
&\sum_{j=1}^K a_j\Big(\sum_{i=1}^\ell P_i(\theta) e^{-\mu_i t}\Big)^j\\
&\qquad = \sum_{i = 1}^{\ell}\Big(\sum_{j = 1}^i \sum_{\mu_{k_1} + \cdots + \mu_{k_j} 
= \mu_i} a_j P_{k_1} \cdots P_{k_j} \Big)e^{-\mu_i t} 
+ O(e^{-(\mu_{\ell}+\epsilon) t}).
\end{align*}
Substituting these estimates in \eqref{eq-RHS-expanded-K-2}, we get 
$$F = a_0 e^{-\rho_0 t} +\sum_{i = 1}^{\ell}\Big(\sum_{j = 1}^i \sum_{\mu_{k_1} + \cdots + \mu_{k_j} = \mu_i} 
a_j P_{k_1} \cdots P_{k_j} \Big)e^{-(\mu_i+\rho_0) t} 
+ O(e^{-(\mu_{\ell+1}+\epsilon ) t}),$$
where we used
$$\mu_\ell + \rho_0 \ge \mu_{\ell+1}.$$
For simplicity, set $\mu_0 = 0$.  Choose $1 \le m \le\ell$ such that $\rho_0 + \mu_{m-1} < \mu_{\ell+1} \leq \rho_0 + \mu_{m}$.
By Proposition \ref{prop-polynomial-coefficients} and the induction hypothesis,
we can reduce \eqref{eq-bootstrapping-eq} to 
\begin{align}\label{eq-bootstrapping-eq-2}
L v_\ell = \sum_{j = 1}^{m} \sum_{\mu_{k_1} + \cdots + \mu_{k_j} 
= \mu_{m}} a_j P_{k_1} \cdots P_{k_j} e^{-(\rho_0 + \mu_{m} )t} + O(e^{-(\mu_{\ell+1}+\epsilon)  t}).
\end{align}
If $\mu_{\ell+1} < \rho_0 + \mu_{m}$, then $\rho_0 +\mu_{m} \ge \mu_{\ell+2}$ 
and \eqref{eq-bootstrapping-eq-2} implies
$$Lv_\ell = O(e^{-(\mu_{\ell+1} + \epsilon) t}).$$
Set $Q_{\ell+1} = 0$ in this case. On the other hand, if $\mu_{\ell+1} = \rho_0 + \mu_{m}$, 
then \eqref{eq-bootstrapping-eq-2} yields
\begin{align*}
L v_\ell = \sum_{j = 1}^{m} \sum_{\mu_{k_1} + \cdots + \mu_{k_j} 
= \mu_{m}} a_j P_{k_1} \cdots P_{k_j} e^{-\mu_{\ell+1}t} + O(e^{-(\mu_{\ell+1} + \epsilon)  t}).
\end{align*}
Consider 
\begin{align*}
\partial_{\theta\theta} Q_{\ell+1} + \mu_{\ell+1}^2 Q_{\ell+1}
= \sum_{j = 1}^{m} \sum_{\mu_{k_1} + \cdots + \mu_{k_j} = \mu_{m}} a_j P_{k_1} \cdots P_{k_j}.
\end{align*} 
By Lemma \ref{lemma-decomposition-of-spherical-harmonics} and Proposition \ref{prop-polynomial-coefficients}, 
the right-hand side has an order at most $[\mu_m] < \mu_{\ell+1}$.
By Lemma \ref{lemma-polynomial_solution}, we conclude that 
there exists a unique smooth solution $Q_{\ell+1}$ such that $Q_{\ell+1}$ is perpendicular to 
$\mathrm{Ker}\, (\partial_{\theta\theta}  +  \mu_{\ell+1}^2)$ in $L^2(\mathbb{S}^1)$.
This finishes the proof of \eqref{eq-higher-order-estimate-inductive-step}.

Now, fix $\epsilon > 0$ small such that $ (\mu_{\ell+1}, \mu_{\ell+1} + \epsilon]$ does not contain any integer.
Let $n$ be the largest integer such that $n < \mu_{\ell+1} + \epsilon$. Then, $n \le \mu_{\ell+1}$.
By Lemma \ref{lemma-same-decay-estimate-for-L}(ii), 
there exist spherical harmonics $X_i\in\mathscr{E}_i$ for $i=1, \cdots, n$, 
such that 
$$v_\ell - Q_{\ell+1} (\theta) e^{-\mu_{\ell+1} t} - \sum_{i = 1}^n X_i(\theta) e^{-i t} 
= O(e^{-(\mu_{\ell+1} + \epsilon) t}).$$ 
First, assume that $\mu_{\ell+1}$ is not an integer. Then, $n \leq \mu_\ell$. 
Since $Q_{\ell+1}(\theta)e^{-\mu_{\ell+1} t}$ and $v_\ell$  
have decay order $ e^{-(\mu_\ell + \epsilon) t}$, 
we must have $X_1, \cdots, X_n = 0$. In this case, we set
$$P_{\ell+1} = Q_{\ell+1}.$$
Next, assume $n=\mu_{\ell+1}$. Then, the same argument shows that $X_1, \cdots, X_{n-1} = 0$. 
Set 
$$P_{\ell+1}(\theta) = X_n(\theta)+Q_{\ell+1}(\theta).$$ 
Therefore, in either case,
$$v_\ell - P_{\ell+1}(\theta) e^{-\mu_{\ell+1} t} = O(e^{-(\mu_{\ell+1} + \epsilon) t}),$$ 
for some $\epsilon > 0$ small. By a similar computation leading to \eqref{eq-bootstrapping-eq} 
and \eqref{eq-bootstrapping-eq-2} but with 
$v_{\ell+1}$ replacing $v_\ell$, we have  
$$Lv_{\ell+1} = O(e^{-\mu_{\ell+2} t}).$$ 
We can improve the decay to $v_{\ell+1} = O(e^{-\mu t})$ for any $\mu_{\ell+1}< \mu < \mu_{\ell+2}$ as before. 
This finishes the proof of \eqref{eq-higher-order-expansion-inequality-mu} by induction. 
Note that, from the construction, there exists a sequence $\mathcal{X} = \{X_i\}_{i\geq 1}$ 
of spherical harmonics such that 
$\{P_\ell\}_{\ell\ge 1} = \{P_\ell(\mathcal{X})\}_{\ell\geq 1}$ is the sequence of smooth functions on $\mathbb S^1$
determined by $\mathcal{X}$ as in Proposition \ref{prop-polynomial-coefficients}. 

Finally, we prove \eqref{eq-higher-order-expansion-inequality}. 
Fix $\ell \ge 1$. 
Choose $\epsilon>0$ small such that $\mu_{\ell+1} +\epsilon <\mu_{\ell+2}$. 
Then, \eqref{eq-higher-order-expansion-inequality-mu} implies
$$v - \sum_{j=1}^\ell P_j(\mathcal{X})(\theta) e^{-\mu_j t} - P_{\ell+1}(\mathcal{X})(\theta)e^{-\mu_{\ell+1} t} 
= O(e^{-(\mu_{\ell+1} + \epsilon) t}).$$
Hence, 
$$v - \sum_{j=1}^\ell P_j(\mathcal{X})(\theta) e^{-\mu_j t} 
= P_{\ell+1}(\mathcal{X})(\theta)e^{-\mu_{\ell+1} t} + O(e^{-(\mu_{\ell+1} + \epsilon) t}) = O(e^{-\mu_{\ell+1}  t}).$$
This proves \eqref{eq-higher-order-expansion-inequality}.
\end{proof}

A function $v$ defined on $[t_0, \infty)\times \mathbb{S}^1$ is said to 
have its {\it asymptotic behavior as $t\to\infty$ determined by $\mathcal{X} = \{X_i\}_{i \geq 1}$} 
if $v$ satisfies \eqref{eq-higher-order-expansion-inequality}, i.e, 
for any $\ell \ge 1$ and any $(t,\theta) \in [t_0,\infty)\times\mathbb{S}^1$,
\begin{align*}
\Big|{v(t,\theta) - \sum_{j=1}^\ell P_j(\mathcal{X})(\theta) e^{-\mu_j t}}\Big| \leq C_{\ell} e^{-\mu_{\ell+1} t},
\end{align*}
for some positive constant $C_{\ell}$. 
By Theorem \ref{thrm-asymptotic-expansion}, 
any solution $v$ of \eqref{eq-linearized-equation} 
with $v(t,\theta)$ converging to $0$ exponentially as $t\to\infty$, uniformly in $\theta \in \mathbb{S}^1$,
has its asymptotic behavior as $t\to\infty$ determined by some sequence $\{X_i\}_{i \geq 1}$ 
of spherical harmonics with $X_i\in\mathscr{E}_i$.

\section{Reparametrizations}
\label{sec-Reparametrizations}

In the previous sections, for an exponential decay solution $v$ to \eqref{eq-linearized-equation}, 
we derived its (formal) asymptotic expansion
\begin{align}
\label{eq-formal-expansion-reparam}
\sum_{\ell=1}^\infty P_\ell(\theta) e^{-\mu_\ell t}.
\end{align}
Recall that $\{P_\ell\}_{\ell\geq 1} = \{P_\ell(\mathcal{X})\}_{\ell\geq 1}$ 
is the sequence of smooth functions on $\mathbb S^1$ determined 
by a sequence $\mathcal{X} = \{X_i\}_{i\geq 1}$ of spherical harmonics
with $X_i\in\mathscr{E}_i$.
In this section, we reparametrize the expansion by appropriate multi-indices $\alpha$ 
and derive several properties that play important roles in our study.

First, we introduce the following index set for the multi-indices.

\begin{definition}
\label{def-index-set-multi-indices}
For each $i \ge 0$, denote by $e_i \in \Z_+^\infty$ the integer sequence whose $i$-th component is $1$ 
and all the other components are $0$. Define
$$\varLambda = \Z_+\text{-}\text{span}\{ e_i;\,i\geq 0\} \setminus\{0\}.$$
\end{definition}

In other words, $\varLambda$ consists of all non-zero sequences $\alpha = \{\alpha_i\}_{i\geq 0}$ 
such that $\alpha_i \in \Z_+$ for any $i\geq 0$ and $\alpha_i = 0$ for all but finitely many $i$. 
We emphasize that there are only finitely many nonzero components in each $\alpha \in \varLambda$. 
This is the main reason we introduce a new notation $\varLambda$ instead of using $\Z_+^{\infty}\setminus\{0\}$. 
Note that the index of $\alpha \in \varLambda$ starts from $0$.

For any $\alpha = \{\alpha_i\}_{i\geq 0} \in \varLambda$, we write
\begin{align}
\label{eq-norm-of-multi-indices}
|\alpha| = \sum_{i=0}^\infty \alpha_i,
\end{align}
and, for any real-valued sequence $x = \{x_i\}_{i\geq 0} \in \R^\infty$, we write
\begin{align}
\label{eq-inner-product-indices}
\langle \alpha, x \rangle = \sum_{i=0}^\infty \alpha_i x_i.
\end{align}
Since $\alpha_i = 0$ for all but finitely many $i$, there are finitely many terms in the sums in \eqref{eq-norm-of-multi-indices} 
and \eqref{eq-inner-product-indices}. In addition, we write
$$\alpha = (\alpha_0,\tilde\alpha),$$
where
$$\tilde \alpha = \{\alpha_i\}_{i \ge 1}.$$
In particular, with $\Z_+$ as the sequence of nonnegative integers and $\N$ as the sequence of positive integers, we have
$$\langle \alpha,\Z_+\rangle = \langle\tilde\alpha,\N\rangle = \sum_{i=1}^\infty i\alpha_i.$$

Throughout this section, we fix a constant $\tau_0 > 0$ 
and a smooth function $f$ on $(-\epsilon_0,\epsilon_0)$, for some $\epsilon_0> 0$, 
with the Taylor series at $0$ given by 
$$f(s) = \sum_{j=0}^\infty a_j s^j.$$
In this section, we do not impose the convergence of this series. 
We only need the collection of the coefficients $\{a_j\}_{j\ge0}$.

We now reparametrize $\{P_\ell(\mathcal{X})\}_{\ell\geq 1}$ by multi-indices $\alpha \in \varLambda$. 
For simplicity, we write 
$$\rho = \{\rho_i\}_{i \geq 0} = (\tau_0,\N).$$

\begin{proposition}
\label{prop-formal-expansion-multi-index}
Let $\{\mu_\ell\}_{\ell\geq 1}$ be the index set associated with $L$ and $\tau_0>0$ as in Definition \ref{def-index-set}, 
$\mathcal{X} =\{X_i\}_{i \geq 1}$ be a sequence of spherical harmonics
with $X_i \in \mathscr{E}_i$, 
and $\{P_\ell\}_{\ell\geq 1}$ be the sequence of smooth functions on $\mathbb S^1$ determined 
by $\mathcal{X}$ as in Proposition \ref{prop-generated-polynomials}.
Define $P_{e_0} = a_0/\rho_0^2$ and $P_{e_i} = X_i$ for $i\geq 1$, and $P_\alpha$ inductively 
for $\alpha= \{\alpha_i\}_{i\geq 0} \in \varLambda$ with $|\alpha| \geq 2$ as follows:
\begin{itemize}
\item[(i)] if $\alpha_0 = 0$, then $P_\alpha = 0$;
\item[(ii)] if $\alpha_0 \ge 1$, then $P_\alpha$ is the unique 
solution of
\begin{align}
\label{eq-recursive-equation-multi-index}
  \partial_{\theta\theta} P_\alpha + \langle \alpha,\rho\rangle^2 P_\alpha 
= \sum_{j = 1}^{|\alpha|-1} \sum_{\beta_1 + \cdots + \beta_j = \alpha - e_0} a_j P_{\beta_1} \cdots P_{\beta_j},
\end{align}
with $P_\alpha$ perpendicular to 
$\mathrm{Ker}\, (\partial_{\theta\theta} + \langle \alpha,\rho\rangle^2 )$ in $L^2(\mathbb{S}^1)$. 
\end{itemize}
Then, $P_\alpha \in C^\infty (\mathbb S^1)$ has an order at most $\langle\alpha,\Z_+\rangle$.
Moreover, for each $\ell \ge 1$,
$$P_\ell = \sum_{\alpha, \langle \alpha, \rho \rangle = \mu_\ell } P_\alpha.$$
\end{proposition}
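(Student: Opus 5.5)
We argue by induction on $|\alpha|$, proving the order bound and well-definedness together, and then verify the identity $P_\ell=\sum_{\langle\alpha,\rho\rangle=\mu_\ell}P_\alpha$ by induction on $\ell$ against the characterization in Proposition \ref{prop-polynomial-coefficients}.

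\emph{Step 1: the order bound.} For $|\alpha|=1$, $P_{e_0}$ is constant (order $0=\langle e_0,\Z_+\rangle$) and $P_{e_i}=X_i$ has order $i=\langle e_i,\Z_+\rangle$. For $|\alpha|\ge 2$ with $\alpha_0=0$ the claim is trivial. If $\alpha_0\ge1$, consider a term $P_{\beta_1}\cdots P_{\beta_j}$ with $\beta_1+\cdots+\beta_j=\alpha-e_0$. Each $|\beta_k|<|\alpha|$, so by induction and Lemma \ref{lemma-decomposition-of-spherical-harmonics} the product has order at most $\sum_k\langle\beta_k,\Z_+\rangle=\langle\alpha-e_0,\Z_+\rangle=\langle\alpha,\Z_+\rangle$. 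Hence the right-hand side $R_\alpha$ of \eqref{eq-recursive-equation-multi-index} is a finite sum; there are only finitely many decompositions because all components are nonnegative. So $R_\alpha$ has finite order $N\le\langle\tilde\alpha,\N\rangle$.

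The key point is solvability. We have $\langle\alpha,\rho\rangle=\alpha_0\tau_0+\langle\tilde\alpha,\N\rangle>N$, since $\alpha_0\tau_0>0$. Thus either $\langle\alpha,\rho\rangle$ is not an integer, or it is an integer strictly larger than the order of $R_\alpha$, so that $R_\alpha\perp\mathrm{Ker}(\partial_{\theta\theta}+\langle\alpha,\rho\rangle^2)$. In both cases Lemma \ref{lemma-polynomial_solution} gives a unique solution perpendicular to the kernel, with the same order as $R_\alpha$, hence order at most $\langle\alpha,\Z_+\rangle$. This is where the factor $e^{-\tau_0 t}$ removes small divisors.

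\emph{Step 2: the identification.} Set $\tilde P_\ell=\sum_{\langle\alpha,\rho\rangle=\mu_\ell}P_\alpha$. This is a finite sum, since $\langle\alpha,\rho\rangle\ge\min(\tau_0,1)|\alpha|$ and $\alpha_i=0$ for $i>\mu_\ell$. By the uniqueness in Proposition \ref{prop-generated-polynomials}, it suffices to check that $\{\tilde P_\ell\}$ satisfies cases (1)--(4) of Proposition \ref{prop-polynomial-coefficients} with the same $\mathcal X$. We proceed by induction on $\ell$.

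\begin{itemize}
\item[(1), (3)] The only contributing $\alpha$ have $\alpha_0=0$. Those with $|\alpha|\ge2$ vanish, leaving $P_{e_{\mu_\ell}}=X_{\mu_\ell}$.
\item[(2)] We get $P_{e_0}=a_0/\rho_0^2$, plus $X_{\rho_0}$ when $\rho_0$ is an integer. Here $\alpha=e_0$ is the only index with $\alpha_0\ge1$ and $\langle\alpha,\rho\rangle=\rho_0$.
\item[(4)] Sum \eqref{eq-recursive-equation-multi-index} over all $\alpha$ with $\alpha_0\ge1$ and $\langle\alpha,\rho\rangle=\mu_\ell$; these are exactly $\alpha=e_0+\gamma$ with $\langle\gamma,\rho\rangle=\mu_m$.
\end{itemize}

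\emph{Step 3: the main obstacle.} This is the combinatorial regrouping in case (4). We must show that
\[
\sum_{\langle\gamma,\rho\rangle=\mu_m}\ \sum_{j}\ \sum_{\beta_1+\cdots+\beta_j=\gamma} a_jP_{\beta_1}\cdots P_{\beta_j}
\]
equals
\[
\sum_j a_j\sum_{\mu_{k_1}+\cdots+\mu_{k_j}=\mu_m}\tilde P_{k_1}\cdots\tilde P_{k_j}.
\]
To do this, expand each $\tilde P_{k_i}$ as a sum of $P_{\beta_i}$ with $\langle\beta_i,\rho\rangle=\mu_{k_i}$, and note that ordered tuples $(\beta_1,\dots,\beta_j)$ biject on both sides. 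The range $j\le|\alpha|-1$ versus $j\le m$ is harmless, since extra terms are vacuous.

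The sum $\sum_{\alpha_0\ge1}P_\alpha$ is then the unique solution perpendicular to the kernel, i.e. $Q_\ell$ (or $P_\ell$ in case (4a)). The $\alpha$ with $\alpha_0=0$ and $\langle\alpha,\rho\rangle=\mu_\ell$ contribute only $e_{\mu_\ell}$, giving $X_{\mu_\ell}$ when $\mu_\ell$ is an integer.

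One subtlety must be handled: when $\tau_0$ is rational, several $\alpha$ with $\alpha_0\ge1$ share the same $\langle\alpha,\rho\rangle$. Linearity of the equation and of the perpendicularity condition ensures the sum still solves the combined equation.
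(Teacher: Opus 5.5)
Your proposal is correct and follows essentially the same route as the paper. Like the paper, you prove $P_\ell=\sum_{\langle\alpha,\rho\rangle=\mu_\ell}P_\alpha$ by induction on $\ell$ through the cases of Proposition \ref{prop-polynomial-coefficients}: you sum \eqref{eq-recursive-equation-multi-index} over $\alpha=e_0+\gamma$ with $\langle\gamma,\rho\rangle=\mu_m$, regroup the ordered tuples $(\beta_1,\dots,\beta_j)$, and conclude by uniqueness, while your Step 1 supplies the order and solvability induction that the paper omits. Two small points: distinct $\alpha$ with $\alpha_0\ge1$ can share the same $\langle\alpha,\rho\rangle$ for every $\tau_0$ (e.g.\ $e_0+e_2$ and $e_0+2e_1$), not only rational ones, though your linearity argument handles this anyway; and the fact that the restriction $j\le m$ loses no terms follows from $\mu_j\le j\mu_1$, as the paper notes.
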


We point out that, by Lemma \ref{lemma-decomposition-of-spherical-harmonics}, 
the right-hand side of \eqref{eq-recursive-equation-multi-index} has an
order at most $\langle \alpha,\Z_+\rangle < \langle \alpha,\rho\rangle$, 
since the $0$-th coordinate of $\Z_+$ is $0$.
Therefore, Lemma \ref{lemma-polynomial_solution} can be applied to solve \eqref{eq-recursive-equation-multi-index}.
We can say that $\{P_\alpha\}_{\alpha \in \varLambda}$ 
is the sequence of smooth functions on $\mathbb S^1$
determined by $\mathcal{X} =\{X_i\}_{i \geq 1}$
and write $P_\alpha(\mathcal X) = P_\alpha$ for the dependence on $\mathcal X$.

\begin{proof}
By a similar inductive argument as in the proof of 
Proposition \ref{prop-polynomial-coefficients}, we obtain the assertion on the existence of $P_\alpha$ 
and its order.
We omit the details.

Set 
\begin{align}\label{eq-definition-R-ell}R_\ell = \sum_{\alpha, \langle \alpha, \rho \rangle = \mu_\ell } P_\alpha.\end{align} 
We will use induction on $\ell$ to prove, for any $\ell \ge 1$,
\begin{align}
\label{eq-R_ell = P_ell}
P_\ell = R_\ell.
\end{align}
In the summation in \eqref{eq-definition-R-ell}, we only need to consider 
$\alpha$ with $|\alpha|=1$ or $|\alpha|\ge 2$ and $\alpha_0\ge 1$ since 
$P_\alpha=0$ for $\alpha$ with $|\alpha|\ge 2$ and $\alpha_0=0$.

First, assume $\mu_\ell < \rho_0$. 
In this case, $\mu_\ell = \ell$ and $P_\ell=X_\ell$ by Proposition \ref{prop-polynomial-coefficients}(1). 
Consider $\alpha = \{\alpha_i\}_{i\geq 0} \in \varLambda$ with $\langle\alpha,\rho\rangle=\mu_\ell = \ell$.
Then, $\alpha_0 = 0$. 
If $|\alpha|=1$, we have only one choice of such $\alpha$ given by $\alpha=e_{\ell}$. 
Hence, $\alpha = e_\ell$ is the only choice of $\alpha$ in the summation in \eqref{eq-definition-R-ell}, and then 
$R_\ell=P_{e_{\ell}}=X_\ell.$
We thus have \eqref{eq-R_ell = P_ell}.

Next, assume $\mu_\ell = \rho_0$. Then, Proposition \ref{prop-polynomial-coefficients}(2) yields
\begin{align*}
P_\ell (\theta) = a_0/\rho_0^2 \quad\text{if }\rho_0\text{ is not an integer,}
\end{align*}
and
\begin{align*}
P_\ell (\theta) = a_0/\rho_0^2 + X_{\rho_0}(\theta) \quad\text{if }\rho_0\text{ is an integer}.
\end{align*}
If $\rho_0$ is not an integer, we have
$\langle\alpha,\rho\rangle = \rho_0$ if and only if $\alpha = e_0.$
In other words, $\alpha = e_0$ is the only choice of $\alpha$ in \eqref{eq-definition-R-ell}.
Hence, $R_\ell = P_{e_0} = a_0/\rho_0^2$. 
If $\rho_0 = n$ is an integer, then $\langle \alpha,\rho\rangle = \rho_0$ 
if and only if $\alpha = e_0,e_n$, or $|\alpha|\ge 2$ and $\alpha_0 = 0$. 
Hence,
$$R_\ell = P_{e_0} + P_{e_n} = a_0/\rho_0^2 + X_n.$$
In either case, \eqref{eq-R_ell = P_ell} holds. 

Note that the discussion above yields \eqref{eq-R_ell = P_ell} for $\ell = 1$, since $\mu_1 \le \rho_0$.
Now, we assume that $\mu_\ell > \rho_0$ and \eqref{eq-R_ell = P_ell} holds for $1,\cdots,\ell-1$. 
By Proposition \ref{prop-polynomial-coefficients}, there are three cases.

First, assume $\mu_\ell \neq \rho_0 + \mu_m$ for any $m \ge 1$. 
By Proposition \ref{prop-polynomial-coefficients}(3), $\mu_\ell = n$ is an integer and $P_\ell = X_n$. 
As in the case $\mu_\ell < \rho_0$ above, if $\langle \alpha,\rho\rangle = \mu_\ell = n$, 
either $\alpha = e_n$ or $|\alpha|\ge 2$ and $\alpha_0 = 0$. 
Hence, $\alpha = e_n$ is the only choice of $\alpha$ in the summation in \eqref{eq-definition-R-ell}, and then 
$R_\ell=P_{e_{n}}=X_n.$ We thus have \eqref{eq-R_ell = P_ell}.

Next, assume $\mu_\ell = \rho_0 + \mu_m$ for some $1 \le m <\ell$ and $\mu_\ell$ is not an integer. 
By \eqref{eq-definition-R-ell}, we get 
$$\partial_{\theta\theta} R_\ell + \mu_\ell^2 R_\ell = \sum_{\alpha, \langle \alpha, \rho \rangle 
= \mu_\ell }  \big(\partial_{\theta\theta} P_\alpha + \langle \alpha, \rho \rangle^2 P_\alpha \big).$$
Since $\mu_\ell$ is not an integer, $\langle \alpha, \rho \rangle = \mu_\ell$ implies $\alpha_0\ge 1$. 
In particular, $\alpha - e_0$ consists of nonnegative terms.
By \eqref{eq-recursive-equation-multi-index} for each such $\alpha$, we have
\begin{align*}
\partial_{\theta\theta} R_\ell + \mu_\ell^2 R_\ell 
= \sum_{\alpha, \langle \alpha, \rho \rangle = \mu_\ell } \sum_{j = 1}^{|\alpha|-1} 
\sum_{\beta_1 + \cdots + \beta_j = \alpha - e_0} a_j P_{\beta_1} \cdots P_{\beta_j}.
\end{align*}
Here, we sum over all $\beta_1,\cdots,\beta_j \in \varLambda$ 
such that $\beta_1 + \cdots + \beta_j = \alpha - e_0$ for some $\alpha \in \varLambda$ 
satisfying $\langle \alpha, \rho \rangle = \mu_\ell$. 
Note that $\langle \alpha, \rho \rangle = \mu_\ell$ if and only if
$$\langle\alpha-e_0,\rho\rangle = \mu_\ell - \rho_0 = \mu_m.$$
Hence, we sum over all $\beta_1,\cdots,\beta_j \in \varLambda$ 
such that $\beta_1 + \cdots + \beta_j$ 
lies in the hyperplane $\{x \in \R^{\infty};\langle x,\rho \rangle = \mu_m\}.$
In other words, we sum over all $\beta_1,\cdots,\beta_j \in \varLambda$ such that 
$\langle \beta_1,\rho \rangle + \cdots + \langle \beta_j,\rho \rangle 
= \mu_m$, for any possible $j \ge 1$. Note that each $\langle\beta_s,\rho\rangle$ is some $\mu_{i_s}$. 
Since 
$$\langle \beta_1,\rho \rangle + \cdots + \langle \beta_j,\rho \rangle \geq j \mu_1 \ge \mu_j,$$ 
we have $j \leq m$. 
Therefore,
\begin{align*}
\sum_{\alpha, \langle \alpha, \rho \rangle = \mu_\ell } \sum_{j = 1}^{|\alpha|-1} 
\sum_{\beta_1 + \cdots + \beta_j = \alpha - e_0} = \sum_{j=1}^m 
\sum_{\substack{\beta_1,\cdots,\beta_j\\ \langle \beta_1,\rho \rangle + \cdots + \langle \beta_j,\rho \rangle = \mu_m}}.
\end{align*}
By a simple rearrangement, we obtain
\begin{align*}
\partial_{\theta\theta} R_\ell + \mu_\ell^2 R_\ell 
&= \sum_{j=1}^m \sum_{\mu_{k_1} + \cdots + \mu_{k_j} = \mu_m} 
a_j\Big(\sum_{\beta_1, \langle \beta_1,\rho \rangle = \mu_{k_1} } 
P_{\beta_1} \Big)\cdots \Big( \sum_{\beta_j, \langle \beta_j,\rho \rangle = \mu_{k_j} } P_{\beta_j}\Big)\\
&= \sum_{j=1}^m \sum_{\mu_{k_1} + \cdots + \mu_{k_j} = \mu_m} a_j R_{k_1} \cdots R_{k_j}.
\end{align*}
In the summation in the right-hand side above, each $k_i$ satisfies $k_i <\ell$.
By the induction hypothesis, we have $P_{k_i}=R_{k_i}$ for each $i=1, \cdots, j$ as above, and hence 
$$\partial_{\theta\theta}R_\ell + \mu_\ell^2 R_\ell
=\sum_{j=1}^m \sum_{\mu_{k_1} + \cdots + \mu_{k_j} = \mu_m} a_j P_{k_1} \cdots P_{k_j}.$$
Since $\mu_\ell$ is not an integer, Lemma \ref{lemma-polynomial_solution}(i) 
and Proposition \ref{prop-polynomial-coefficients}(4a) yield \eqref{eq-R_ell = P_ell}.

Finally, assume $\mu_\ell = \rho_0 + \mu_m$ for some $1 \le m <\ell$ and $\mu_\ell = n$ is an integer. 
By \eqref{eq-definition-R-ell}, we have 
$$R_\ell = P_{e_n} +  \sum_{\substack{|\alpha|\geq 2, \,\alpha_0\ge 1,\\ \langle \alpha, \rho \rangle = \mu_\ell} } 
P_\alpha  = X_n +  S_\ell.$$  
By (ii), each $P_\alpha$ in the summation above is perpendicular to 
$ \mathrm{Ker}\, (\partial_{\theta\theta} + \mu_\ell^2) = \mathscr{E}_n$ in $L^2(\mathbb{S}^1)$,
and then $S_\ell$ is perpendicular to 
$\mathscr{E}_n$ in $L^2(\mathbb{S}^1)$. 
Note that $\langle \alpha, \rho \rangle = \mu_\ell$ with $|\alpha| \ge 2$ and $\alpha_0 \ge 1$ if and only if
$\alpha - e_0 = \alpha'$ for some $\alpha' \in \varLambda$ with $|\alpha'|\ge 1$ and $\langle\alpha',\rho\rangle = \mu_m$. 
A similar computation as in the previous case yields
$$ \partial_{\theta\theta} S_\ell + \mu_\ell^2 S_\ell 
= \sum_{j=1}^m \sum_{\mu_{k_1} + \cdots + \mu_{k_j} = \mu_m} a_j P_{k_1} \cdots P_{k_j}.$$
By Proposition \ref{prop-polynomial-coefficients}(4b), we obtain \eqref{eq-R_ell = P_ell}. This finishes the proof by induction.
\end{proof}

By Proposition \ref{prop-formal-expansion-multi-index}, we can write \eqref{eq-formal-expansion-reparam} as
\begin{align*}
\sum_{\alpha \in \varLambda} P_\alpha(\theta) e^{- \langle\alpha,\rho\rangle t}.
\end{align*}
We now make a remark. 
Let $\mathcal{X} = \{X_i\}_{i\geq 1}$ be a sequence of spherical harmonics 
with $X_i\in\mathscr{E}_i$. 
In the parametrization $\{P_\ell(\mathcal{X})\}_{\ell \geq 1}$ given by $\ell \geq 1$, 
$P_\ell(\mathcal{X})$ is described by Proposition \ref{prop-polynomial-coefficients}. 
Roughly speaking, if $\mu_\ell$ is not an integer, then $P_\ell$ is uniquely determined 
by $P_1,\cdots,P_{\ell-1}$ and $a_0/\tau_0^2$, and if $\mu_\ell = n$ is an integer, 
then either $P_\ell = X_n$ or $P_\ell = X_n + Q_\ell$ for some $Q_\ell$ uniquely determined 
by $P_1,\cdots,P_{\ell-1}$ and $a_0/\tau_0^2$, where $X_n$ is one of the prescribed spherical harmonics.  
On the other hand, for the parametrization $\{P_\alpha(\mathcal{X})\}_{\alpha \in \varLambda}$ 
given by $\alpha \in \varLambda$, we require $P_{e_0} = a_0/\rho_0^2$ and $P_{e_i} = X_i$ for any $i \geq 1$, 
and, for $|\alpha| \geq 2$, either $P_\alpha=0$ or $P_\alpha$ 
is the unique solution to \eqref{eq-recursive-equation-multi-index} such that $P_\alpha$ is perpendicular to 
$\mathrm{Ker}\, (\partial_{\theta\theta} + \langle\alpha,\rho\rangle^2)$ in $L^2(\mathbb{S}^1)$. 
Hence, in the case where $\langle\alpha,\rho\rangle = n$ for some $n \geq 1$ with $\alpha_0 \ge 1$ and $|\alpha|\ge 2$, 
although \eqref{eq-recursive-equation-multi-index} does not have a unique solution, 
$P_\alpha (\mathcal{X})$ is unique under the additional requirement that $P_\alpha(\mathcal{X})$ 
is perpendicular to $\mathscr{E}_n$ in $L^2(\mathbb{S}^1)$.

\section{Convergence of the Expansions}
\label{sec-Convergence-of-the-Expansions}

In this section, we construct solutions to \eqref{eq-linearized-equation} 
if $f$ is an analytic function on $(-\epsilon_0, \epsilon_0)$, for some $\epsilon_0 >0$. 
More precisely, we prove that the formal expansion
\begin{align}
\label{eq-formal-expansion-multi-index-1}
\sum_{\alpha \in \varLambda} P_\alpha (\mathcal{X})(\theta) e^{-\langle\alpha,\rho\rangle t},
\end{align}
determined by a sequence $\mathcal{X} =\{X_i\}_{i \geq 1}$ of spherical harmonics with $X_i \in \mathscr E_i$,
converges to an exponential decay solution $v$ of \eqref{eq-linearized-equation},
and the asymptotic behavior of $v$ is determined by $\mathcal{X}$.

The proof of the convergence of the series \eqref{eq-formal-expansion-multi-index-1} 
is based on the method of majorant. The next lemma provides an upper bound.

\begin{lemma}
\label{lemma-majorant-series}
Let $A_0$, $A$, $B$, and $M$ be given nonnegative constants. Define $A_{e_0} = A_0$, $A_{e_i}  = A^i$ for $i \geq 1$, 
and $A_\alpha = A_\alpha(A_0,A,B,M)$ inductively for $\alpha = \{\alpha_i\}_{i\ge 0} \in \varLambda$ with $|\alpha|\geq 2$ 
by $A_\alpha = 0$ if $\alpha_0 = 0$ and
\begin{align}
\label{eq-def-A_alpha}
A_\alpha = B \sum_{j = 1}^{|\alpha|-1}M^j \sum_{\beta_1 + \cdots + \beta_j = \alpha - e_0} A_{\beta_1} \cdots A_{\beta_j},
\end{align}
if $\alpha_0 \ge 1$.
Then, for any $k,\ell \ge 0$ with $k+\ell \ge 1$,
\begin{align}
\label{eq-majorant-series}
\sum_{\substack{\alpha = (\alpha_0,\tilde \alpha)\\\alpha_0 = k,\,\langle\tilde\alpha,\N\rangle = \ell}}A_\alpha \le D^{k+\ell},
\end{align}
for some positive constant $D$ depending only on $A_0$, $A$, $B$, and $M$.
\end{lemma}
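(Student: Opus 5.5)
The plan is a generating-function (majorant) argument in two variables. Define the formal power series
\begin{align*}
G(x,y) = \sum_{\alpha\in\varLambda} A_\alpha\, x^{\alpha_0} y^{\langle\tilde\alpha,\N\rangle}
= \sum_{k+\ell\ge 1} S_{k,\ell}\, x^k y^\ell,
\end{align*}
where $S_{k,\ell}$ denotes the left-hand side of \eqref{eq-majorant-series}. Each $S_{k,\ell}$ is a finite sum: $\alpha_0=k$ is fixed, and $\langle\tilde\alpha,\N\rangle=\ell$ allows only finitely many $\tilde\alpha$. The conclusion \eqref{eq-majorant-series} follows from Cauchy estimates once we show that $G$ converges near $(0,0)$, because then $S_{k,\ell}\le G(r,r)\,r^{-k-\ell}$ for small $r>0$, and we may take $D = \max\{1,G(r,r)\}/r$.

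First we derive a functional equation for $G$. The terms with $|\alpha|=1$ contribute
\begin{align*}
A_0x+\sum_{i\ge1}A^iy^i = A_0x+\frac{Ay}{1-Ay}.
\end{align*}
Call this $g_0(x,y)$. The terms with $|\alpha|\ge2$ and $\alpha_0=0$ vanish. For $\alpha_0\ge1$ and $|\alpha|\ge2$, the monomial weight $x^{\alpha_0}y^{\langle\tilde\alpha,\N\rangle}$ is multiplicative under addition of multi-indices. Hence \eqref{eq-def-A_alpha} gives
\begin{align*}
\sum_{\alpha_0\ge1,|\alpha|\ge2}A_\alpha x^{\alpha_0}y^{\langle\tilde\alpha,\N\rangle}
= Bx\sum_{j\ge1}M^j\,\big[G^j\big],
\end{align*}
where the bracket means that we keep exactly the terms corresponding to decompositions $\beta_1+\cdots+\beta_j=\alpha-e_0$ with $j\le |\alpha|-1$. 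Since each $|\beta_s|\ge1$, the constraint $j\le|\alpha-e_0|$ is automatic. The only discrepancy is $j=1$ with $\alpha-e_0=\beta_1$ of length $1$, i.e. $|\alpha|=2$, $j=1$; there $|\alpha|-1=1$ also allows it, so no term is lost. Everything has nonnegative coefficients, so as formal series
\begin{align*}
G = g_0(x,y) + Bx\,\frac{MG}{1-MG}.
\end{align*}
If some bookkeeping mismatch does appear, it is enough to have $\le$ coefficientwise, since $G$ is then majorized by the solution $W$ of the equation below.

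Next we solve $W = g_0(x,y) + Bx\,\Phi(W)$ with $\Phi(w)=Mw/(1-Mw)$ and $W(0,0)=0$. Since $\partial_W$ of the right-hand side equals $1$ minus zero at the origin (the factor $x$ kills $Bx\Phi'(0)$), the analytic implicit function theorem gives a unique analytic solution $W$ near $(0,0)$. Its Taylor coefficients are obtained by the same recursion in total degree in $(x,y)$, and that recursion determines them uniquely. So the coefficients of $W$ coincide with those of $G$ (or dominate them, by induction on $k+\ell$, using positivity of all coefficients). Hence $G$ converges on a polydisc $|x|,|y|<r$.

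The main obstacle is making the identification of the recursion \eqref{eq-def-A_alpha} with the power-series identity rigorous. One has to check two things. First, the grouping by $(k,\ell)$ interchanges with the infinite sum over $\varLambda$; this is fine since everything is nonnegative and each coefficient is a finite sum. Second, the induction on $k+\ell$ closes: a decomposition of $\alpha-e_0$ involves $\beta_s$ with strictly smaller $k+\ell$, because $\alpha-e_0$ has degree $k-1+\ell$. Once that is done, the Cauchy estimate finishes the proof, with $D$ depending only on $A_0,A,B,M$ through $r$ and $W$.
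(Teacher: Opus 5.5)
Your proposal is correct and is essentially the paper's argument: the paper likewise groups the $A_\alpha$ by $(\alpha_0,\langle\tilde\alpha,\N\rangle)$, shows that the resulting two-variable generating function formally satisfies $\varphi = A_0 s+\sum_{\ell\ge1}A^\ell t^\ell + sB\,M\varphi/(1-M\varphi)$, and concludes from the analyticity of the unique solution of this equation near the origin. You reach the functional equation directly via the multiplicativity of the weight $x^{\alpha_0}y^{\langle\tilde\alpha,\N\rangle}$ instead of first writing out the recursion for the grouped sums, and you make explicit two steps the paper leaves implicit: that the formal solution is unique (so it coincides with the analytic one), and the Cauchy-estimate step that extracts $D$.
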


\begin{proof}
For $\alpha \in \varLambda$, we write
$$\alpha = (k,\tilde\alpha)\quad\text{and}\quad A_\alpha = A_{(k,\tilde\alpha)},$$
and define, for any $k,\ell \ge 0$ with $k+\ell \ge 1$, 
$$B_{k,\ell}  = \sum_{\langle \tilde\alpha,\N\rangle = \ell} A_{(k,\tilde\alpha)}.$$
Then, for any $(k,\ell)$ with $k\ge 1$ and $k+\ell \ge 2$,
\begin{align*}
B_{k,\ell} = \sum_{\langle \tilde\alpha,\N\rangle = \ell} A_{(k,\tilde\alpha)} 
= \sum_{\langle \tilde\alpha,\N\rangle = \ell}
B \sum_{j=1}^{k+|\tilde\alpha|-1} M^j 
\sum_{(k_1,\tilde\beta_1) + \cdots + (k_j,\tilde\beta_j) = (k-1,\tilde\alpha)} 
A_{(k_1,\tilde\beta_1)}\cdots A_{(k_j,\tilde\beta_j)}.
\end{align*}
Here, we sum over all $(k_1,\tilde \beta_1), \cdots, (k_j,\tilde\beta_j)$ such that 
$$k_1 +\cdots + k_j = k-1\quad\text{and}\quad\tilde \beta_1 + \cdots + \tilde\beta_j = \tilde\alpha,$$ 
for some $\tilde\alpha$ with $\langle\tilde\alpha,\N\rangle = \ell$.
In other words, we sum over all possible $(k_1,\tilde\beta_1),\cdots,(k_j,\tilde \beta_j)$ such that
$$k_1 +\cdots + k_j = k-1\quad\text{and}\quad\langle\tilde \beta_1,\N\rangle + \cdots + \langle\tilde\beta_j,\N\rangle = \ell.$$ 
Note that 
$$1 \le j \le \sum_{s=1}^j |(k_s,\tilde\beta_s)| \le \sum_{s=1}^j \big(k_s + \langle\tilde\beta_s,\N\rangle \big) = k+\ell-1.$$
By switching the order of summation, we have
$$\sum_{\langle \tilde\alpha,\N\rangle = \ell}
\sum_{j=1}^{k+|\tilde\alpha|-1} 
\sum_{(k_1,\tilde\beta_1) + \cdots + (k_j,\tilde\beta_j) = (k-1,\tilde\alpha)} 
=
\sum_{j=1}^{k+\ell-1} \sum_{k_1 + \cdots +k_j = k-1}\sum_{\substack{\tilde\beta_1,\cdots,\tilde\beta_j\\
\langle\tilde\beta_1,\N\rangle + \cdots + \langle\tilde\beta_j,\N\rangle = \ell}}.
$$
Then, by a simple rearrangement of the last summation, we have
$$\sum_{\langle \tilde\alpha,\N\rangle = \ell}
\sum_{j=1}^{k+|\tilde\alpha|-1} 
\sum_{(k_1,\tilde\beta_1) + \cdots + (k_j,\tilde\beta_j) = (k-1,\tilde\alpha)} 
=
\sum_{j=1}^{k+\ell-1} \sum_{(k_1,\ell_1) + \cdots +(k_j,\ell_j) = (k-1,\ell)}\sum_{\substack{\tilde\beta_1,\cdots,\tilde\beta_j\\
\langle\tilde\beta_1,\N\rangle=\ell_1,  \cdots,  \langle\tilde\beta_j,\N\rangle = \ell_j}}.$$
Therefore, we obtain, for any $k\ge 1$ and $k+\ell\ge 2$,
\begin{align*}
&\sum_{\langle \tilde\alpha,\N\rangle = \ell}
B \sum_{j=1}^{k+|\tilde\alpha|-1} M^j 
\sum_{(k_1,\tilde\beta_1) + \cdots + (k_j,\tilde\beta_j) = (k-1,\tilde\alpha)} 
A_{(k_1,\tilde\beta_1)}\cdots A_{(k_j,\tilde\beta_j)}\\
&\quad=B\sum_{j=1}^{k+\ell-1}
M^j \sum_{(k_1,\ell_1) +\cdots + (k_j,\ell_j) = (k-1,\ell)} 
\big( \sum_{\tilde\beta_1,\langle\tilde\beta_1,\N\rangle = \ell_1} 
A_{(k_1,\tilde\beta_1)} \big) \cdots 
\big( \sum_{\tilde\beta_j,\langle\tilde\beta_j,\N\rangle = \ell_j} A_{(k_j,\tilde\beta_j)} \big)\\
&\quad= B\sum_{j=1}^{k+\ell-1} 
M^j \sum_{(k_1,\ell_1) +\cdots + (k_j,\ell_j) = (k-1,\ell)} 
B_{k_1,\ell_1}\cdots B_{k_j,\ell_j}.
\end{align*}
In summary, we have, for any $(k,\ell)$ with $k\ge 1$ and $k+\ell \ge 2$,
\begin{align}
\label{eq-recursive-equation-B_k,l}
B_{k,\ell} = B\sum_{j=1}^{k+\ell-1} 
M^j \sum_{(k_1,\ell_1) +\cdots + (k_j,\ell_j) = (k-1,\ell)} 
B_{k_1,\ell_1}\cdots B_{k_j,\ell_j}.
\end{align}
Now, consider the formal expansion 
$$\varphi (s,t)= \sum_{(k,\ell)\ne 0} B_{k,\ell} s^kt^\ell\quad\text{near }0.$$
Note that right-hand side of \eqref{eq-recursive-equation-B_k,l} is the $(k,\ell)$-th coefficient of the expansion of
$$sB \sum_{j=1}^\infty M^j \varphi^j = sB\frac{M \varphi}{1 - M\varphi}. $$
It is obvious that $k\ge 1$ and $k+\ell \ge 2$ are violated if and only if $k = 0$ or $k = 1$ and $\ell = 0$.
Then, formally, $\varphi=\varphi(s,t)$ satisfies the algebraic equation
\begin{align}\label{eq-equation-phi}\varphi - \sum_{\ell=1}^\infty A^\ell t^\ell - A_0 s = sB\frac{M \varphi}{1 - M\varphi},
\end{align}
where we used $B_{0,\ell} = A_{e_\ell} = A^\ell$ for $\ell \ge 1$, and $B_{1,0} = A_{e_0} = A_0$.
Since $\varphi(0,0) = 0$, the algebraic equation \eqref{eq-equation-phi}
has a unique analytic solution $\varphi$ near $0$. 
In particular, 
there exists $D  = D(A_0,A, B,M) > 0$ such that \eqref{eq-majorant-series} holds.
\end{proof}

The estimate \eqref{eq-majorant-series} is referred to as an {\it analyticity-type estimate}.
We point out that, for any $0 < \eta < 1$,
$$\sum_{(k,\ell)\ne (0,0)}\eta^{k+\ell} = \frac{1}{(1-\eta)^2}-1.$$

In the study of nonlinear equations, it is convenient to adopt certain Sobolev norms. 
We now introduce an inner product involving derivatives. 
Fix a positive integer $2_*$. 
For any $u,v \in H^{2_*}(\mathbb{S}^1)$, 
consider the standard $H^{2_*}$-inner product given by
\begin{align*}
\langle u, v\rangle_{*} 
= \sum_{i=0}^{2_*} \langle \partial_\theta^i u, \partial_\theta^i v\rangle_{L^2(\mathbb{S}^1)},
\end{align*}
where $\partial_{\theta}^0 \equiv \mathrm{Id}$, and write 
$$\|u\|_{*} = \langle u,u\rangle_{*}^\frac{1}{2}.$$ 
We now make some remarks. 
First, we have the Banach algebra property by the $1$-dimensional Sobolev inequality. 
Specifically, there exists a constant $C > 1$ such that
\begin{align}
\label{eq-Banach-algebra-property-with-constant}
\|u v\|_{*} \leq C \|u\|_{*}\|v\|_{*}, 
\end{align}
for any $u,v \in H^{2_*}(\mathbb{S}^1)$. 
To simplify notations, we will drop the constant in \eqref{eq-Banach-algebra-property-with-constant}. 
In other words, we re-define $\langle\cdot,\cdot\rangle_{*}$ by
\begin{align}
\label{eq-H^m-inner-product-with-constant}
\langle u,v \rangle_{*} 
=  C^2 \sum_{i=0}^{2_*} \langle \partial_\theta^i u, \partial_\theta^i v\rangle_{L^2(\mathbb{S}^1)},
\end{align}
for any $u,v \in H^{2_*}(\mathbb{S}^1)$. Then, \eqref{eq-Banach-algebra-property-with-constant} reduces to
\begin{align}
\label{eq-Banach-algebra-property-without-constant}
\|u v\|_{*} \leq \|u\|_{*}\|v\|_{*},
\end{align}
for any $u,v \in H^{2_*}(\mathbb{S}^1)$. 
Second, by construction, $\partial_{\theta\theta}$ is self-adjoint in $H^{2_*}(\mathbb{S}^1)$ in the sense that
$$\langle \partial_{\theta\theta} u, v\rangle_{*} = \langle  u,  \partial_{\theta\theta} v\rangle_{*},$$
for any $u,v \in C^\infty(\mathbb{S}^1)$.


Now, we are ready to prove that the formal series \eqref{eq-formal-expansion-multi-index-1} 
converges to a smooth solution $v$ of \eqref{eq-linearized-equation} with prescribed asymptotic behavior. 

\begin{theorem}
\label{thrm-main-theorem-cylindrical-coords}
Assume that $\tau_0>0$ is a constant and $f$ is an analytic function on $(-\epsilon_0,\epsilon_0)$ given by 
$$f(s) = \sum_{j=0}^\infty a_j s^j,$$
with $|a_j| \leq M^j$, $j \geq 1$, for some $M>0$.  
Let $\mathcal{X} =\{X_i\}_{i \geq 1} $ 
be a sequence of spherical harmonics with $X_i \in \mathscr{E}_i$ 
and $\{P_\alpha(\mathcal{X})\}_{\alpha \in \varLambda}$ be 
the sequence of smooth functions on $\mathbb S^1$ determined by $\mathcal{X}$
as in Proposition \ref{prop-formal-expansion-multi-index}. 
Assume that, for any $i \geq 1$,
\begin{align}
\label{eq-exponential-growth-condition-expansion-1}
\|X_i\|_{L^2(\mathbb{S}^1)} \leq \tilde A^i,
\end{align}
for some constant $\tilde A>0$.
Then, there exists a positive constant 
$t_0$, depending only on $a_0$, $\tau_0$, $\tilde A$, and $M$, 
such that the series \eqref{eq-formal-expansion-multi-index-1}
converges in the $C^m$-sense to a smooth function $v$
on $[t_0,\infty)\times\mathbb{S}^1$, for any $m \ge 0$.
Moreover, $v$ is a solution to \eqref{eq-linearized-equation}
on $[t_0,\infty)\times \mathbb S^1$ with an exponential decay and 
its asymptotic behavior is determined by $\mathcal{X} =\{X_i\}_{i \geq 1}$.
\end{theorem}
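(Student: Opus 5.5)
The plan is a majorant argument. The goal is to bound $\|P_\alpha\|_*$ by the coefficients $A_\alpha$ of Lemma \ref{lemma-majorant-series}, for suitable constants $A_0,A,B,M$. Once this is done, \eqref{eq-majorant-series} gives
$$\sum_{\alpha_0=k,\ \langle\tilde\alpha,\N\rangle=\ell}\|P_\alpha\|_*\le D^{k+\ell}.$$

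\textbf{Step 1: base cases.} For $|\alpha|=1$ we need $\|P_{e_0}\|_*=\|a_0/\rho_0^2\|_*\le A_0$ and $\|X_i\|_*\le A^i$. The first is immediate. For the second, $X_i\in\mathscr E_i$, so $\|X_i\|_*\le C(1+i^2)^{2_*/2}\|X_i\|_{L^2}\le C i^{2_*}\tilde A^i$. This is at most $A^i$ for $A$ large, depending on $\tilde A$ and $2_*$ only.

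\textbf{Step 2: inductive bound for $|\alpha|\ge 2$, $\alpha_0\ge1$.} Expand the right-hand side $R_\alpha$ of \eqref{eq-recursive-equation-multi-index} in $\mathscr E_n$-components. Since $P_\alpha\perp\mathrm{Ker}$, its components are $(\mu^2-n^2)^{-1}(R_\alpha)_n$, where $\mu=\langle\alpha,\rho\rangle$. The orders satisfy $n\le\langle\alpha,\Z_+\rangle=\langle\tilde\alpha,\N\rangle$, while $\mu=\alpha_0\tau_0+\langle\tilde\alpha,\N\rangle\ge\tau_0+n$. Hence
$$\mu^2-n^2\ge(\mu-n)(\mu+n)\ge\tau_0^2.$$
This is exactly where the factor $e^{-\tau_0t}$ removes small divisors. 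Since the $\mathscr E_n$ are orthogonal for $\langle\cdot,\cdot\rangle_*$, we get $\|P_\alpha\|_*\le\tau_0^{-2}\|R_\alpha\|_*$. Then \eqref{eq-Banach-algebra-property-without-constant} and $|a_j|\le M^j$ give
$$\|P_\alpha\|_*\le \tau_0^{-2}\sum_j M^j\sum\|P_{\beta_1}\|_*\cdots\|P_{\beta_j}\|_*.$$
For $\alpha_0=0$, $|\alpha|\ge2$ we have $P_\alpha=0=A_\alpha$. By induction on $|\alpha|$, $\|P_\alpha\|_*\le A_\alpha$ with $B=\tau_0^{-2}$. The induction is well founded since each $\beta_s$ has $|\beta_s|<|\alpha|$.

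\textbf{Step 3: convergence.} On $[t_0,\infty)$,
$$\sum_\alpha\|P_\alpha\|_* e^{-\langle\alpha,\rho\rangle t}\le\sum_{(k,\ell)\neq 0}D^{k+\ell}e^{-(k\tau_0+\ell)t},$$
which converges geometrically once $De^{-\min(\tau_0,1)t_0}<1$. For the $C^m$ convergence:
\begin{itemize}
\item $\theta$-derivatives are controlled by Sobolev embedding, taking $2_*\ge m+1$. The constant $2_*$ is arbitrary, and the dependence of $D$ on it affects only $t_0$. To obtain all $m$ with one $t_0$, I would note that the $\theta$-derivatives can alternatively be absorbed using the order bound $\deg P_\alpha\le\ell$, at the cost of factors $\ell^m$ that are swallowed by shrinking the geometric ratio slightly.
\item $t$-derivatives produce factors $\langle\alpha,\rho\rangle^m\le C(k+\ell)^m$, handled the same way.
\end{itemize}

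\textbf{Step 4: $v$ solves the equation and has the prescribed asymptotics.} Absolute convergence allows termwise differentiation and rearrangement, together with analyticity of $f$ once $|v|<\epsilon_0$ (ensured by enlarging $t_0$). Then $f(v)=\sum_j a_jv^j$ expands into products whose $e^{-\langle\alpha,\rho\rangle t}$ coefficients match the recursion \eqref{eq-recursive-equation-multi-index}, using $L(P_\alpha e^{-\mu t})=(\partial_{\theta\theta}P_\alpha+\mu^2P_\alpha)e^{-\mu t}$ and $\mathscr E_i$-membership for $\alpha=e_i$. So $v$ solves \eqref{eq-linearized-equation}. By Proposition \ref{prop-formal-expansion-multi-index}, $v=\sum_\ell P_\ell(\mathcal X)e^{-\mu_\ell t}$. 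The tail beyond $\mu_\ell$ is bounded by $e^{-\mu_{\ell+1}(t-t_0')}$ times a convergent sum, obtained by factoring out $e^{-\mu_{\ell+1}t}$ and evaluating the rest at a slightly larger $t_0$. This gives the required bound with the correct exponent and shows $v$ decays exponentially.

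The main obstacle is Step 2: verifying the uniform lower bound on the divisor $\mu^2-n^2$, together with the order bound for the components of $R_\alpha$, which must hold simultaneously in the $H^{2_*}$ norm. The rearrangements in Step 4 are routine given absolute convergence.
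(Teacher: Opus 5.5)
Your proposal is correct and follows essentially the same route as the paper. You majorize $\|P_\alpha\|_*$ by the coefficients $A_\alpha$ of Lemma~\ref{lemma-majorant-series}, using the divisor bound $\langle\alpha,\rho\rangle^2-n^2\ge\tau_0^2$ and the Banach algebra property, and you get $C^m$ convergence with a single $t_0$ from the order bound $\|\partial_\theta^m P\|_*\le N^m\|P\|_*$. The one difference is your tail estimate for the asymptotics (factoring out $e^{-\mu_{\ell+1}(t-t_0)}$ directly), which is a slightly simpler variant of the paper's argument with the auxiliary constant $K$.
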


\begin{proof} The proof consists of three steps. 

{\it Step 1. We prove that the series \eqref{eq-formal-expansion-multi-index-1}
converges uniformly and absolutely in $[t_0,\infty)\times \mathbb{S}^1$,
for some $t_0 \ge 1$ depending only on $a_0$, $\tau_0$, $\tilde A$, and $M$.}
First, note that, for $X \in \mathscr{E}_i$, 
$$\|X\|_{*} = C(1 + \lambda_i + \cdots + \lambda_i^{2_*})^{\frac{1}{2}} \|X\|_{L^2(\mathbb{S}^1)},$$
where $C$ is the constant in \eqref{eq-H^m-inner-product-with-constant}.
Hence, 
$$\|X\|_{*} \leq C(2_*+1)^{\frac{1}{2}} \lambda_i^\frac{2_*}{2} \|X\|_{L^2(\mathbb{S}^1)} 
\leq \tilde C^i \|X\|_{L^2(\mathbb{S}^1)},$$
for some constant $\tilde C > 0$, independent of $i$. 
By \eqref{eq-exponential-growth-condition-expansion-1}, 
there exists $ A> 0$, depending only on $\tilde A$, such that, for any $i \geq 1$,
\begin{align}
\label{eq-exponential-growth-condition-H^m-norm-expansion}
\|X_i\|_{*} \leq  A^i.
\end{align}

For simplicity, we write $P_\alpha = P_\alpha(\mathcal{X})$. 
We claim that, for any $\alpha \in \varLambda$ with $|\alpha| \ge 2$ and $\alpha_0 \ge 1$,
\begin{align}
\label{eq-recursive-estimate-LHS<=RHS}
\|P_\alpha\|_{*} \leq \rho_0^{-2} \sum_{j = 1}^{|\alpha|-1}M^j 
\sum_{\beta_1 + \cdots + \beta_j = \alpha - e_0}  \|P_{\beta_1}\|_* \cdots \|P_{\beta_j}\|_*.
\end{align}
Set
\begin{align}
\label{eq-def-R_alpha}
R_\alpha = \sum_{j = 1}^{|\alpha|-1} \sum_{\beta_1 + \cdots + \beta_j = \alpha - e_0} a_j P_{\beta_1} \cdots P_{\beta_j}.
\end{align}
By \eqref{eq-recursive-equation-multi-index}, we have
\begin{align}
\label{eq-reduced-equation-P_alpha}
\partial_{\theta\theta} P_\alpha + \langle\alpha,\rho\rangle^2 P_\alpha = R_\alpha.
\end{align}
By Lemma \ref{lemma-decomposition-of-spherical-harmonics} and Proposition \ref{prop-formal-expansion-multi-index},
the order of $R_\alpha$ is at most $\langle\alpha,\Z_+\rangle$. 
In particular, we can write
$$R_\alpha = \sum_{i=0}^{\langle\alpha,\Z_+\rangle} R_{\alpha,i},$$
for some $R_{\alpha,i} \in \mathscr{E}_i$, $i = 0,\cdots, \langle\alpha,\Z_+\rangle$.
Note that
\begin{align*}
\langle\alpha,\rho\rangle^2 - \langle\alpha,\Z_+\rangle^2 
= \alpha_0^2\rho_0^2 + 2 \alpha_0\rho_0\langle\tilde\alpha,\N\rangle \ge \rho_0^2. 
\end{align*}
By \eqref{eq-reduced-equation-P_alpha} and $\langle\alpha,\Z_+\rangle< \langle\alpha,\rho\rangle$, we get
$$P_\alpha = \sum_{i=0}^{\langle\alpha,\Z_+\rangle} (\langle\alpha,\rho\rangle^2 - i^2)^{-1} R_{\alpha,i}.$$
Then, $\|P_\alpha\|_* \le \rho_0^{-2} \|R_\alpha\|_*$, 
and hence \eqref{eq-recursive-estimate-LHS<=RHS} follows from \eqref{eq-Banach-algebra-property-without-constant} and \eqref{eq-def-R_alpha}.

Now, recall that $\rho_0 = \tau_0>0$ and $\rho_i= i$ for any $i \ge 1$. 
Let $A_0 = \||a_0|/\rho_0^2\|_{*}$, 
$A$ be the constant in \eqref{eq-exponential-growth-condition-H^m-norm-expansion},
$B = \max \{\rho_0^{-2}, 1\}$,
and $M$ be the constant in \eqref{eq-recursive-estimate-LHS<=RHS},
and let $A_\alpha$ and $D$ be the constants in Lemma \ref{lemma-majorant-series}
depending only on $A_0$, $A$, $B$, and $M$. 
Then, by \eqref{eq-def-A_alpha}, \eqref{eq-exponential-growth-condition-H^m-norm-expansion}, 
\eqref{eq-recursive-estimate-LHS<=RHS}, and a simple induction argument, 
we obtain, for any $\alpha \in \varLambda$,
\begin{align}
\label{eq-P_alpha<=A_alpha}
\|P_\alpha\|_* \le A_\alpha.
\end{align}
By \eqref{eq-majorant-series}, we obtain,
for any $k,\ell \ge 0$ with $k+\ell \ge 1$, 
$$\sum_{\substack{\alpha = (\alpha_0,\tilde \alpha)\\\alpha_0 = k,\,\langle\tilde\alpha,\N\rangle = \ell}} \|P_\alpha\|_* e^{-\langle\alpha,\rho\rangle t} \le (D e^{-\rho_0 t})^{k} (D e^{-t})^{\ell},$$
and hence, by the Sobolev embedding, 
for any $(t,\theta) \in [0,\infty)\times \mathbb S^{n-1}$,
$$\sum_{\substack{\alpha = (\alpha_0,\tilde \alpha)\\\alpha_0 = k,\,\langle\tilde\alpha,\N\rangle = \ell}} |P_\alpha(\theta)| e^{-\langle\alpha,\rho\rangle t} \le C_0(D e^{-\rho_0 t})^{k} (D e^{-t})^{\ell},$$
for some positive constant $C_0 $.
We now take $t_0\ge 1$ such that, for any $t \ge t_0$, 
$$D e^{-\rho_0 t} \le 1/2\quad\text{and}\quad D e^{-t} \le 1/2.$$
Hence, the series \eqref{eq-formal-expansion-multi-index-1} converges
uniformly and absolutely on $[t_0,\infty)\times \mathbb S^{1}$.

{\it Step 2. We improve the convergence to the $C^m$-sense, for any $m \ge 1$.}
We will use the following estimate:
{\it Let $P \in C^\infty(\mathbb S^1)$ be of order at most $N$, 
then, for any $m \ge 0$,
\begin{align}
\label{eq-estimate-C^m-polynomial-order-N}
\|\partial_{\theta}^m P\|_* \le N^m \|P\|_*.
\end{align}}
We point out that \eqref{eq-estimate-C^m-polynomial-order-N} follows from the fact that any $X \in \mathscr E_i$ can be written as a linear combination of $\cos (i\theta)$ and $\sin(i\theta)$.
Recall that, by Proposition \ref{prop-formal-expansion-multi-index}, $P_\alpha$ has an order at most $\langle\alpha,\Z_+\rangle$.
By \eqref{eq-P_alpha<=A_alpha}, \eqref{eq-estimate-C^m-polynomial-order-N}, and the Sobolev embedding,
for any $\alpha \in \varLambda$ and nonnegative integers $m_1,m_2$,
$$|\partial_\theta^{m_1} \partial_t^{m_2} (P_\alpha(\theta) e^{-\langle\alpha,\rho\rangle t})| \le C_0 \langle\alpha,\Z_+\rangle^{m_1} \langle\alpha,\rho\rangle^{m_2} A_\alpha,$$
for some positive constant $C_0$.
Since $\langle\alpha,\Z_+\rangle \le \langle\alpha,\rho\rangle$, 
we get, for any $m \ge 1$,
$$|\nabla_{(t,\theta)}^{m}  (P_\alpha(\theta) e^{-\langle\alpha,\rho\rangle t})| \le C_0 (m+1)  \langle\alpha,\rho\rangle^{m} A_\alpha.$$
Therefore, for any $k,\ell \ge 0$ with $k+\ell \ge 1$,
$$\sum_{\substack{\alpha = (\alpha_0,\tilde \alpha)\\\alpha_0 = k,\,\langle\tilde\alpha,\N\rangle = \ell}}|\nabla_{(t,\theta)}^{m}  (P_\alpha(\theta) e^{-\langle\alpha,\rho\rangle t})| \le C_0 (m+1)  (\rho_0 k+\ell)^m (D e^{-\rho_0 t})^k (D e^{-t})^\ell.$$
We point out that, for any nonnegative integer $p,q$,
$$\sum_{(k,\ell)\ne (0,0)} k^p \ell^q \frac{1}{2^{k+\ell}} < \infty.$$
By our choice of $t_0$ in Step 1, the series 
\begin{align*}
\sum_{\alpha \in \varLambda} \nabla^m_{(t, \theta)}\big(P_\alpha(\theta) e^{-\langle\alpha,\rho\rangle t}\big)\end{align*}
converges uniformly and absolutely on $[t_{0}, \infty) \times \mathbb{S}^{n-1}$.
This holds for any $m\ge 1$.
Therefore, the series
$$\sum_{\alpha \in \varLambda} P_\alpha e^{-\langle\alpha,\rho\rangle t}$$
converges in the $C^m$-sense to a smooth function $v$ on $[t_0,\infty)\times\mathbb S^1$,
for any $m \ge 0$.
Moreover, by \eqref{eq-recursive-equation-multi-index} and a simple rearrangement,
it is straightforward to check that $v$ is a solution to \eqref{eq-linearized-equation}.

{\it Step 3. We prove  that the asymptotics of $v$ is determined by $\mathcal{X}$.}
As in Proposition \ref{prop-formal-expansion-multi-index}, we write
\begin{align}
\label{eq-formal-expansion-last}
v (t,\theta) = \sum_{\ell=1}^\infty P_\ell(\theta)e^{-\mu_\ell t},
\end{align}
where
$$P_\ell = \sum_{\alpha,\langle\alpha,\rho\rangle = \mu_\ell} P_\alpha.$$
The series in \eqref{eq-formal-expansion-last} converges in the $C^m$-sense on $[t_0,\infty)\times\mathbb S^1$ for any $m \ge 0$.
Fix $\ell \ge 1$. We have
$$v - \sum_{j=1}^\ell P_j(\theta) e^{-\mu_j t} 
= \sum_{\alpha,\langle\alpha,\rho\rangle \ge \mu_{\ell+1}} P_\alpha(\theta) e^{-\langle\alpha,\rho\rangle t}.$$
Choose $K>0$ such that $\mu_{\ell+2} - \mu_{\ell+1} \ge \mu_{\ell+2}/K$. 
Then, for any $\langle\alpha,\rho\rangle \ge \mu_{\ell+2}$,
$$\langle\alpha,\rho\rangle - \mu_{\ell+1} \ge \langle \alpha,\rho\rangle/K.$$
Hence, for any $t \ge t_{0,K}$,
\begin{align*}
\Big|\sum_{\alpha,\langle\alpha,\rho\rangle \ge \mu_{\ell+1}} P_\alpha(\theta) e^{-\langle\alpha,\rho\rangle t} \Big| 
&\le \big|P_{\ell+1}(\theta) e^{-\mu_{\ell+1} t} \big| + e^{-\mu_{\ell+1} t}
\Big|\sum_{\alpha,\langle\alpha,\rho\rangle\ge \mu_{\ell+2}} P_\alpha(\theta) e^{-(\langle\alpha,\rho\rangle - \mu_{\ell+1}) t}  \Big|\\
& \le \big|P_{\ell+1}(\theta) e^{-\mu_{\ell+1} t} \big| + e^{-\mu_{\ell+1} t}
\Big|\sum_{\alpha,\langle\alpha,\rho\rangle\ge \mu_{\ell+2}} P_\alpha(\theta) e^{-\langle\alpha,\rho\rangle t/K}  \Big|. 
\end{align*}
We now choose a positive constant $t_{0,K}$ such that, for any $t \ge t_{0,K}$,
$$ De^{-\rho_0t/K}\le 1/2\quad\text{and}\quad  De^{t/K}\le 1/2.$$
As in the proof of Step 2, up to a multiplicative constant, the last sum is controlled by
$$\sum_{(k,\ell)\ne (0,0)} \frac{1}{2^{k+\ell}} = 3.$$
Therefore, we obtain, for any $\ell \ge 1$,
$$v - \sum_{j=1}^\ell P_j(\theta) e^{-\mu_j t} = O(e^{-\mu_{\ell+1} t}).$$
In other words, the asymptotics of $v$ is determined by $\mathcal{X}$.
\end{proof}

The solution $v$ in Theorem \ref{thrm-main-theorem-cylindrical-coords} 
determined by $\mathcal{X}$ is unique by the unique continuation for elliptic equations with singular coefficients.
To be specific, assume that $v_1$ and $v_2$ are two solutions to \eqref{eq-linearized-equation} 
and their asymptotic behaviors are determined by
the same sequence of spherical harmonics $\{X_i\}_{i\ge 1}$ with $X_i \in \mathscr{E}_i$.
By Theorem \ref{thrm-asymptotic-expansion}, $w = v_1 - v_2 = O(e^{-\mu t})$ as $t\rightarrow\infty$ for any $\mu>0$.
In the Euclidean coordinates, $w = O(|x|^{\mu})$ as $|x|\rightarrow 0$ for any $\mu > 0$, and
$$\Delta w = |x|^{\tau_0-2} c(x) w,$$
for some constant $\tau_0 > 0$ and bounded function $c$ near the origin. 
By the unique continuation, we conclude $w = 0$, i.e., $v_1 = v_2$.
Refer to \cite{Hormander}, \cite{JerisonKenig}, and \cite{Meshkov} 
for proofs by Carleman-type estimates, 
or to \cite{GarofaloLin1} and \cite{GarofaloLin2} by monotonicity formula.


Theorem \ref{thrm-correspondence-harmonic-to-solutions-Intro} 
follows from Theorem \ref{thrm-main-theorem-cylindrical-coords}.

\end{document}